\documentclass[preprint]{article}

\usepackage{jmlr2e}
\usepackage{lastpage}

\usepackage{amsmath,bm}
\usepackage{booktabs}
\usepackage{graphicx}
\usepackage{float}
\usepackage{mdframed}
\usepackage{tikz}
\usetikzlibrary{arrows.meta,bending,positioning}
\usepackage{microtype}
\usepackage{xcolor}

\newcommand{\X}{X}
\newcommand{\x}{x}
\newcommand{\W}{W}

\newcommand{\Y}{Y}
\newcommand{\Z}{Z}
\renewcommand{\S}{S}
\newcommand{\R}{\mathbb{R}}
\newcommand{\E}{\mathbb{E}}
\newcommand{\etiqueta}{\ell}
\newcommand{\argmin}{\operatorname{arg\,min}}
\newcommand{\p}{p}
\newcommand{\z}{z}
\newcommand{\y}{y}

\newcommand{\s}{s}

\newcommand{\e}{e}
\newcommand{\source}{s}
\newcommand{\perpT}[2]{#1_{#2}}

\newcommand{\defeq}{\stackrel{\text{def}}{=}}

\newcommand{\indep}{\perp\!\!\!\perp}
\DeclareMathOperator{\Var}{Var}
\DeclareMathOperator{\Corr}{Corr}
\DeclareMathOperator{\Cov}{Cov}

\firstpageno{1}
\begin{document}

\title{Invariant Feature Extraction Through Conditional Independence and the Optimal Transport Barycenter Problem: The Gaussian Case}

\author{\name Ian Bounos \email ibounos@dm.uba.ar \\
        \addr Departamento de Matemática, FCEyN, UBA e IMAS, UBA-CONICET
        \AND
        \name Pablo Groisman \email pgroisma@dm.uba.ar \\
        \addr Departamento de Matemática, FCEyN, UBA e IMAS, UBA-CONICET
        \AND
        \name Mariela Sued \email \email msued@udesa.edu.ar\\
        \addr Universidad de San Andrés, CONICET
        \AND
        \name Esteban G.\ Tabak \email egt1@nyu.edu \\
        \addr Courant Institute of Mathematical Sciences, New York University}
\editor{}

\maketitle


    \begin{abstract}
We develop a new methodology for transfer learning in which a predictor trained on a source environment is deployed in a target environment whose joint distribution may differ. The approach extracts $d$ invariant features $W = f(X)$ that predict response variables $Y$ without being confounded by variables $Z$ that may influence both $X$ and $Y$ and whose joint distribution with $Y$ may shift across environments. When the true confounders $Z$ are unknown, observable contextual variables $S$ can be used as surrogates. Motivating applications include medical diagnosis, algorithmic fairness, and domain generalization.
The methodology's main ingredient is the penalization of statistical
dependence between $W$ and $Z$ conditional on $Y$, which we replace by the
more readily implementable plain independence between $W$ and the random
variable $\perpT{Z}{Y} = T(Z,Y)$ that solves the Monge Optimal Transport
Barycenter Problem for $Z \mid Y$. Both
substitutions are \emph{a priori} relaxations,
but in the Gaussian setting considered in this article they turn out to be
equivalences. The resulting linear feature extractor admits a closed form as the leading $d$ eigenvectors of an explicit matrix. 
The same ideas extend to general distributions and nonlinear feature extractors, a setting treated in a companion paper.
\end{abstract}

\begin{keywords}
  Optimal transport, Wasserstein barycenter, Invariance, Causality, Transfer Learning
\end{keywords}

	\section{Introduction}

    The dominant paradigm in statistical learning has long relied on the assumption that training and test data follow identical distributions. This paradigm has been challenged by learning tasks in which the training data may suffer from various forms of bias, unbalanced/unfair representations and confounding variables. Because of this, it is currently more frequently understood that the population in which one seeks predictions may differ largely from the one used for training.  To highlight this distinction, we denote  the training and deployment distributions, also referred to as \emph{out-of-distribution} (\cite{yang2024generalized, arjovsky2020ood})- as source and target respectively.

	The corresponding challenge is to construct a predictor that, although trained on data from the source distribution, still performs well on the target one, a problem known as  \emph{transfer learning}. Motivational examples abound in the context of medical diagnosis and more general health care settings, where it is commonly observed that algorithms fail to generalize well when applied to populations other than the ones they were originally trained on \cite{Enzo}.

	Guaranteeing a good performance under the target distribution based on knowledge from the source is infeasible without assuming some relation between the two.
	A principle frequently invoked states that, for a relationship between the predictors ${X}$ and the labels $Y$ to be truly informative and stable, it should remain invariant across environmental changes. This idea connects to frameworks such as \textit{causal learning} \cite{fan2024environment}, \textit{invariance risk minimization} \cite{arjovsky2019invariant}, and \textit{domain generalization} \cite{zhou2021domain}
	or  \textit{covariate  shift}, which posit that the conditional distribution of $Y\mid \X$ agrees at source and target.  Others follow  the so called \textit{label shift} invariance, where $X\mid Y$ does not depend on the joint distribution of $(\X,Y)$ but the marginal distribution of $Y$ may vary between source and target.  This condition, often referred to as anti-causal,  reflects situations such as diagnosing a disease ($\Y$) from the symptoms ($\X$) that it originates (see, for instance, \cite{lipton2018detecting} and \cite{scholkopf2012causal}).

\cite{Hickey} propose a Bayesian transfer learning framework,
\emph{Random Effect Calibration of Source to Target} (RECaST), which addresses
distribution shift by recalibrating a source-trained predictor to a target population.
Their approach models discrepancies between source and target through a low-dimensional
random effect and provides uncertainty quantification in the form of
prediction sets with asymptotically correct coverage. In contrast to approaches that seek
invariant representations, RECaST treats transfer learning as a calibration problem at the
prediction stage and is agnostic to the internal structure of the source model.	Recently, \cite{cheng2025transfer} introduced the
\textit{group-label shift assumption}, postulating that the vector of covariates may be partitioned into $(\Z, \X)$  and assume that the conditional distribution of  $\X\mid(\Z,\Y)$ does not depend on the joint distribution of $(\Z,\Y,\X)$, allowing the joint distribution of $(\Z, \Y)$ to differ between in and out of distribution.

The last setting is consistent with the one we consider in this article.  To fix ideas, imagine that $\Y$ is a health outcome and $\X$ represents symptoms, test results, medical images, or biomarkers, while there are other factors, such as patient demographics and medical history, that may influence both $\Y$ and $\X$, yet should not affect the underlying diagnostic relationship we aim to learn, since a) they may not always be available and b) their joint distribution with the outcome $Y$ may depend on the environment.
We subdivide these latter factors into two groups: the \emph{confounders} $\Z$, which together with $Y$ fully determine the distribution of $X$, and any other \emph{contextual} variables $S$, that may relate to $Y$ and $Z$ but do not affect $X$ directly. In these circumstances, it makes sense to only use $\X$ for predicting $\Y$, but to use $Z$ in the learning process, and also the $S$ as surrogates, since at least some of the true confounders $\Z$ are often unobserved or even unknown.

We propose a method to extract features $\W=f(\X)$ that yield robust predictions for $\Y$ by removing the confounding effect of $\Z$. We penalize any dependence between $W$ and $Z$ conditioned on $Y$, formalizing a trade-off between predictive sufficiency and conditional invariance. Both quantities are quantified through an optimal transport perspective and integrated within a single loss function.

For each environment $\e \in \mathcal{E}=\{s,t\}$, where $s$ and $t$ stand for source and target, respectively,   we use $(\Y^\e, Z^e, S^\e,  \X^\e)$, as in the potential outcome literature, to denote the random variables that  correspond to the \emph{same experiments}, that is, they describe the same variables (except possibly for the variables $S$, a set that may depend on the environment) under the different conditions modeled by the environment $\e$.	We use the term \textit{environment} to refer to a collection of circumstances that gives rise to distribution shifts, such as location, time, or experimental conditions, as in \cite{scholkopf2021toward}. Environments encompass both source and target distributions, providing a unified framework to address \textit{Transfer Learning}, \textit{Domain Adaptation}, \textit{Out-of-Distribution Generalization}, and even \textit{Robustness}---different conceptual understandings of very similar problems.

\subsection{Our Contribution}
\label{seq:our_conribution}

We propose to construct features $W = f(\X)$ that satisfy two criteria:
\begin{enumerate}
 \item \label{con_inv}\textbf{Conditional invariance:} $W$ should be independent of  $Z$ conditional on $Y$, i.e., $W \indep \Z\mid Y$.
 \item \textbf{Predictive sufficiency:} $W$ should contain most of the information in $\X$ relevant to predict $Y$.
\end{enumerate}

The condition that $W \indep Z \mid Y$ is referred to as counterfactual invariance in \cite{NEURIPS2021_8710ef76} and is equivalent to $W \mid Y,Z = W\mid Y$. It captures the idea that $W$ should only reflect features of $\X$ that are genuinely related to $Y$, not to spurious correlations with the contextual information. Asking $W$ to be plainly independent of $Z$ would be too restrictive: since $Z$ and $Y$ may be related, any good predictor for $Y$ is likely to be informative on $Z$ too. In our formulation, we penalize any dependence between $W$ and $\Z$ conditioned on $Y$, formalizing a trade-off between predictive sufficiency and conditional invariance in a loss function formulation. Conditional invariance differs from the more conventional requirement that $\Y\mid (\W, \Z) \sim \Y\mid \W$ or $\Y\indep \Z\mid \W$, referred  in \cite{puli2021out} as an uncorrelated representation. Under the latter, if the correlation between $\X$ and $\Z$ depends on the environment, then $\W$ may capture  information related to $\Z$ which can  be harmful for out of sample prediction. 

When addressing fairness, the variable $Z$ can be a set of protected or sensitive attributes (e.g., gender or racial categorization) that we want to be fair with respect to. In this setting, the condition $W\indep Z\mid Y$ is equivalent to {\em equalized odds} \cite{mehrabi2022, hardt2016}. We implement the counterfactual invariance  condition by means of the Optimal Transport Barycenter Problem (OTBP) which, to the best of our knowledge, has not been used before for such a purpose and constitutes one of the main contributions of this work.

In summary, the main contributions of this paper are:

\begin{enumerate}
    \item A new framework to extract features suitable for out of sample predictions, which does not require to assume a causal model nor to know/observe all confounding variables.

	\item A machinery based on the OTBP that on the one hand, gives interpretability, generality and a sound setting to the estimators and, on the other hand, leads to an invariant feature extractor in closed form.

\end{enumerate}


A word is in order on the scope of this paper. We restrict attention to Gaussian distributions and linear feature extractors and predictors, and a reader may reasonably ask why this case deserves a dedicated treatment rather than a section of the forthcoming general paper. Our answer is threefold. First, the methodology proposed here — penalizing conditional dependence on confounders through the optimal transport barycenter, and using contextual surrogates that are in fact equivalent, not merely relaxations, when $\Sigma_{ZS}$ has full rank — is, to the best of our knowledge, new. It has not been developed in the Gaussian case either, despite this being the setting in which invariance, confounding, and transfer learning have been most extensively studied. Second, the Gaussian case is the one in which the machinery becomes fully transparent: conditional independence reduces to a verifiable algebraic condition, the OT barycenter admits a closed form coinciding with a linear regression residual, and the final feature extractor is obtained in closed form as the top eigenvectors of an explicit matrix. This transparency is pedagogically valuable — it lets the reader see exactly what each ingredient of the methodology contributes — and it is also practically valuable, since it yields an estimator that is immediately implementable and interpretable. Third, the same ideas extend to general distributions and nonlinear feature extractors, where the closed-form eigenvector solution is replaced by numerical OTBP solvers and linear maps by functional spaces; this extension is the subject of a forthcoming companion paper. Rather than blurring the two contributions, we have chosen to develop the Gaussian case on its own terms, where the structural properties of the method can be stated and proved cleanly, and to treat the general case separately, where the emphasis shifts to numerical implementation and approximation guarantees.

For conceptual clarity, we develop the methodology at the population level. As mentioned at the end of Subsection~\ref{d1}, the results can be immediately translated to practical situations with a finite number of observations, simply replacing expected values by empirical means.

\subsection{Related work}
\label{subsec:related.work}

In addition to the works mentioned above, several recent approaches have addressed the challenge of learning predictive models that are robust to environmental shifts, especially when covariates $\X$ and outcome $Y$ may be spuriously correlated due to the influence of the environment. These methods attempt to exploit different forms of \textit{invariance} to recover the true signal between predictors and outcomes. We briefly review four prominent approaches: Invariant Risk Minimization (IRM) \cite{arjovsky2019invariant}, Anchor Regression \cite{rothenhausler2021anchor}, Environment Invariant Linear Least Squares (EILLS) \cite{fan2024environment} and Domain-Adversarial Neural Networks (DANN) \cite{ganin2016domain}. Many of these methods build on  the framework presented in \cite{peters2016causal}, where data  $(\X, Y)$ are collected under \textit{different experimental settings or regimes}.  We remark that, since the literature is huge, we cannot provide an exhaustive review and apologize for all resulting omissions.

\cite{arjovsky2019invariant} propose the IRM principle, which aims to learn a data representation $\Phi(\X)$ such that the optimal classifier $w$ on top of $\Phi(\X)$ is invariant across environments. Formally, they seek representations $\Phi$ and classifiers $w$ such that
\[
w \in \argmin_{w'} R^e(w' \circ \Phi) \quad \text{for all environments } e,
\]
where $R^e$ denotes the expected risk in environment $e$. The key idea is that if $\Phi$ captures the \emph{causal features} of the input $\X$—those that genuinely influence the outcome $Y$ in all environments—then the same predictor $w$ will perform well everywhere. In this sense, $\Phi$ is said to be \emph{invariant} because the relationship between $\Phi(\X)$ and $Y$ remains stable, regardless of the environment $e$. In contrast, non-causal or spurious features may vary across environments, leading to predictors that do not generalize.

By enforcing the invariance of the optimal classifier across environments, IRM encourages the learning of representations that exclude spurious correlations and emphasize stable, potentially causal relationships. \cite{rothenhausler2021anchor} introduce Anchor Regression as a principled interpolation between ordinary least squares (OLS) and instrumental variable (IV) regression. Their method leverages exogenous variables called \textit{anchors}, which may influence both $\X$ and $Y$, to formulate a regularized loss:
		\[
		b_\gamma = \argmin_b \mathbb{E}\left[ \left( (I - \Pi_A)(Y - \X^\top b) \right)^2 + \gamma \left( \Pi_A(Y - \X^\top b) \right)^2 \right],
		\]
where $\Pi_A$ denotes projection onto the anchor variables. The parameter $\gamma$ controls the tradeoff between predictive performance on the training data and robustness to perturbations aligned with the anchor variables. This framework is particularly useful when some environmental variation is captured by observed variables $A$, and provides robustness guarantees under specific classes of distributional shifts. \cite{fan2024environment} propose EILLS as a sample-efficient method for linear models across multiple environments. The key assumption is that the conditional expectation $\mathbb{E}[Y \mid  \X_{S^*}]$ is invariant across environments, even if the full joint distribution of $(\X, Y)$ varies. The EILLS objective combines the least-squares risk across environments with a penalty that promotes exogeneity:
\[
Q_\gamma(\beta) = \sum_{e} \mathbb{E}\left[(Y^e - \X^e{}^\top \beta)^2\right] + \gamma \sum_j 1\{\beta_j \neq 0\} \sum_e \left( \mathbb{E}[(Y^e - \X^e{}^\top \beta) X_j^e] \right)^2.
\]
This penalty discourages dependence between the residuals and covariates across environments, effectively selecting features that are invariant and exogenous. Compared to IRM and Anchor Regression, EILLS provides finite-sample statistical guarantees and variable selection consistency, even in high-dimensional settings, without requiring knowledge of anchor variables or structural causal assumptions.

DANN \cite{ganin2016domain} is a deep learning method in which several conceptual features of our approach are already present. In DANN, a feature extractor is trained jointly with a label predictor and a domain classifier, where the latter is connected via a \emph{gradient reversal layer} that encourages the learned features to be invariant with respect to the domain variable $e$. The goal is to extract representations that are useful for predicting $Y$ and do not carry information about $e$.

Our approach departs from DANN in a crucial aspect: we explicitly require that the learned representation $W = f(\X)$ be \emph{conditionally} independent of the contextual variables  $\Z$ given the label $Y$. This is a stronger and more targeted constraint than the marginal invariance encouraged in DANN, and reflects the intuition that features can reflect information about the domain insofar as it is already explained by the label. In contrast, enforcing $W \indep Z$ (which corresponds to the idea used  in DANN) may undesirably remove predictive signal in cases where $Y$ and the environment are themselves highly dependent.

\paragraph{Organization of the paper.} Section \ref{toy_example} presents a simple example to motivate and illustrate the methodology developed in this work.  Section \ref{sec:model} establishes the model we will deal with throughout the paper and the proposal's main idea. Section \ref{sec:methodology} presents the new methodology, starting for expository reasons with the simplest case: one-dimensional response $Y$ and observed confounder $Z$ and joint multivariate Gaussian distribution for $(Y,Z,X)$.  Surrogates $S$ to replace $Z$, higher dimensional $Y$, $Z$ and $W$ and categorical labels are presented in subsequent subsections.   Section \ref{sec:theoretical} establishes the theoretical foundations of the proposal, through a set of proofs that we skip along the exposition to enhance readability. Section \ref{sec:experiments} presents numerical experiments that illustrate and validate the proposal. Section \ref{sec:avenues} discusses briefly possible extensions of the methodology to different scenarios, including general distributions and nonlinear feature extractors, which will be developed more thoroughly in a forthcoming paper.

\paragraph{Notation.} The following notation is used throughout the paper: $\| x\|$ denotes the Euclidean norm of a vector $x$, $A^\top$ stands for the transpose of a matrix $A$, $\text{tr}(A)$ for its trace and $\|A\|_2 = \sqrt{\text{tr}(A^\top A)}$ for its Frobenius norm. The $d$-dimensional identity matrix is denoted by ${\rm I}_d$. Given a random vector $V \in \mathbb R^p$, $\mathbb E(V)\in \mathbb R^p$ denotes its mean value and  $\Sigma_V\in \mathbb R^{p\times p}$ its covariance matrix. If $U\in \mathbb R^s$ is another random vector, $\Sigma_{V U}\in \mathbb R^{p\times s}$ denotes their covariance matrix, with $\Sigma_{V U}(i, j)=\Cov(V_i, U_j)$, while $\Sigma U=\Sigma_{ UU}$. We use $\mu_U(V)$ and $\Sigma_U(V)$ for the conditional mean and variance matrix  of the conditional distribution of $V$ given $U$: $V\mid U$. Finally, the symbols $\indep$ and $\perp$ respectively denote independence and uncorrelation.

\section{Toy example}	\label{toy_example}

Consider the following toy example, which mimics the spirit of Example 1 in  \cite{arjovsky2019invariant}, with structural equation model

\begin{equation*}
		(\Z, \Y) \leftarrow  \mathcal{N}\left(0,\Sigma^e_{ZY}\right),
		\quad
		\X_1\leftarrow   \Z+\mathcal{N}(0, \sigma^2_1),
		\quad
		\X_2\leftarrow  \Y- \Z+\mathcal{N}(0, \sigma^2_2),
\end{equation*}
where $\Sigma^e_{ZY}$ is a covariance matrix for $(\Z, \Y)$ with  $\Var(\Z)=\Var(\Y)=1$ and $\Cov(\Z, \Y)=\rho_e$, making the joint distribution of $(\Y, \Z, \X)$ depend on the environment $e$  through the correlation $\rho_e$.
We observe $(\z_i, \y_i, (\x_1,\x_2)_i)_{i=1}^n$ drawn from a joint source distribution, characterized by $(\Sigma_{\rho_s}, \sigma_1^2,\sigma_2^2)$. The linear combination
		\[
		W = X_1 + X_2 \quad \mbox{satisfies}\quad 	W \;\indep\; Z \mid Y,
	    \]
and is therefore invariant to changes in $\rho$.

By contrast, $X_1$ is predictive of $Y$ only through the confounder $Z$. When $|\rho_s|$ is large, $X_1$ can yield strong predictive performance in source, but that prediction will not generalize to the target environment if this has a different $\rho$, all the way to $\rho=0$, where $Y$ and $Z$ are independent.

This toy example shows how exploiting environment-specific correlations can lead to unstable predictors, while enforcing conditional invariance yields features that generalize across environments. The challenge is to perform this task in real situations, where we do not know the functional relation linking $X$ to $Y$ and $Z$, and we may not observe $Z$ or even know what the true confounder variables $Z$ are.

\section{The model and main ideas}
\label{sec:model}
	
Consider the random vector $(\Y, \Z, \X)\in \R^{d_Y} \times \R^{d_Z} \times \R^{d_X}$ and recall that $(\Y^e, \Z^e, \X^e)$ stands for the outcome of the same experiment at the environment $e\in \mathcal E=\{s, t\}$ (for source and target). We assume that the conditional distribution of $\X$ given $(\Y,\Z)$ does not depend on the environment $e$. Namely, if $\p_e(\y, \z, \x)$ stands for the joint distribution of $(\Y^e,\Z^e, \X^e)$, we assume that it can be factorized as
\begin{equation}
			\label{model}
			\p_e(\y, \z, \x)=\p_e(\y) \;\p_e(\z\mid \y)\;\p (\x\mid \z, \y), \quad\mbox{for any $e\in \mathcal E$.}
\end{equation}
When the marginal for $y$ is independent of the environment, i.e. $p_e(y)=p_t(y)=p(y)$, a family of distributions satisfying this factorization --which does not imply an associated structural model-- is called a nuisance varying model \cite{puli2021out}.  In Subsection \ref{surrogate}, we generalize \eqref{model}, incorporating a contextual variable $S \in \R^{d_S}$, useful when $Z$ is unknown or unobserved

We seek a feature $ W = f(X)\in \R^{d}$ of externally provided dimension $d$, as  informative on $\Y$ as possible but also as independent of $\Z$ as possible \emph{conditioned on $Y$} for the source distribution $p_{\source}$:
\begin{equation}
		W \indep \Z\mid Y,
			\label{W,Z|Y}
\end{equation}
a condition equivalent to both
\begin{equation}
			W \mid Y, Z \sim  W \mid Y  \quad \mbox{and}
			\quad  \Z \mid \Y, W \sim  \Z \mid \Y,
			\label{W|Y,Z}
\end{equation}
\cite{dawid1979conditional},  meaning  that, unlike $\X$, $W$ should not contain more information on $\Z$ than what is already contained in $\Y$. It follows from (\ref{model}) and\eqref{W|Y,Z} that the conditional distribution $W \mid Y$ is independent of the environment. Then it is enough to enforce conditional invariance in the source distribution $\source$, making $W$ well-suited for out-of-distribution predictions. If in addition $p_e(y)$ is independent of the environment, then the full joint distribution of $(W, \Y)$ is invariant across environments. On the other hand, if $p_t(y) \ne p_s(y)$ but $p_t(y)$ is independently known (a case not infrequent in practice), then the joint distribution of $(W, \Y)$ in target can be recovered from the invariant conditional distribution $W \mid Y$.

To quantify the predictive capability of $W$ for $Y$, we consider different criteria depending on the nature of $Y$. For jointly Gaussian variables, we measure the relative error associated with the best linear predictor of $Y$ based on $W$. When $Y$ is categorical, we seek a projection that maximizes the weighted distance between the conditional distributions of $W$ given each class label $Y = y$. Both approaches can be formulated in terms of optimal transport barycenter problems, which facilitates their extension to nonlinear settings. They are presented in detail in subsections \ref{multivariate} and \ref{categorical}.

\subsection{Introducing a framework based on optimal transport}

This subsection shows how the optimal transport barycenter problem, in its Monge formulation proposed in \cite{tabak2021data} and \cite{tabak2025monge}, provides a general framework to enforce conditional invariance and predictive  sufficiency. In order to implement \eqref{W,Z|Y}, we make use of the fact, established in Lemma \ref{relaxed_indep}, that
\begin{equation}
	W \indep Z \mid Y \ \Rightarrow \ W \indep \perpT{Z}{Y},
	\label{one_way}
\end{equation}
where $\perpT{Z}{Y} = T(Z, Y)$ is the solution to the optimal transport barycenter problem
\begin{equation}\label{otbp}
\perpT{Z}{Y}= \argmin_{U= \tilde T(Z, Y)}\; \mathbb{E}\big[c(Z, U)\big]
	\quad \text{s.t. } U \indep Y.
\end{equation}
Here $c(\cdot, \cdot)$ is a pairwise cost function, for which we adopt the squared Euclidean norm
\begin{equation*}
	\label{cost_cuad}
c(x, y) =
\left\|y - x \right\|^2 .
\end{equation*}
Throughout this article, \( \perpT{Z}{Y} = T(Z,Y) \) denotes the solution to the (OTBP) in \eqref{otbp}, interpreted as \( Z \) stripped of the effect of \( Y \): $T$ is the map with minimal cost that removes from $Z$ any variability attributable to $Y$. As discussed in~\cite{tabak2025monge}, for smooth conditional distributions, this problem coincides with the Wasserstein barycenter problem: if $p_y(Z)$ denotes the conditional distribution of $Z \mid Y = y$, then the distribution $p^\ast(\perpT{Z}{Y})$ of the random variable $\perpT{Z}{Y}=T(Z, Y)$ solves the corresponding Wasserstein barycenter problem:
$$
p^\ast=\argmin_p \mathbb E[W_2^2(p_Y, p)], \quad  W_2^2(\rho, \nu)=\inf_{\xi \in \Pi(\rho, \nu)}\mathbb E_{(U,V)\sim\xi}(\vert\vert U-V\vert\vert^2),
$$
where $\Pi(\rho, \nu)$ is the set of joint distributions having $\rho$ and $\nu$ as marginals. Moreover,
\begin{equation*}
	\label{var_dist}
\mathbb E[W_2^2(p_y, p^\ast)]=\mathbb E[ c(Z,	\perpT{Z}{Y})].
\end{equation*}
Then we relax our starting condition \eqref{W,Z|Y} to
\begin{equation}
	W \indep \perpT{Z}{Y},
	\label{WZ}
\end{equation}
much more straightforward to implement and equivalent to \eqref{W,Z|Y} for Gaussian distributions (Lemma \ref{gauss_vuelta}). The optimal transport may be also used to quantify the predictive capability of $W$ for $Y$.
When $Y$ is  continuous, $\perpT{Y}{W}=T(Y, W)$
 quantifies how much $W$ can explain the variability in $Y$ by means of
\begin{equation*}
\mathbb D(Y, W)=\mathbb E[c(Y, \perpT{Y}{W})],
\end{equation*}
since the larger the share of the variability in $Y$ that $W$ can explain, the more costly it becomes to remove from $Y$ that variability by moving $Y$ to $\perpT{Y}{W}$. Thus, high values of $\mathbb D(Y, W)$ are desirable. When the outcome $Y$ is categorical, we can use the cost of the reciprocal optimal transport  $\perpT{W}{Y}=T(W, Y)$,
\begin{equation*}
 \mathbb D(W, Y) = \mathbb E[c(W, \perpT{W}{Y})],
  \label{M_ot_3}
\end{equation*}
which extends the notion of variance to sets of distributions equipped with the Wasserstein distance.

Again, high values of $\mathbb D(W, Y)$ reflect a strong discriminative capability of $W$ for classification. 

Even though this article centers on Gaussian distributions, for which the optimal transport formulation of our problem can be rephrased in more familiar terms associated to linear regression and correlation, the optimal transport barycenter problem still provides a unified conceptual framework, extendable to general distributions.

\section{Methodology for the Gaussian case}
\label{sec:methodology}

In the Gaussian/linear case, one simple objective function with closed form solution covers nearly all instances of the proposed methodology. However, for clarity in the exposition, rather than presenting the full methodology at once, we build it gradually, starting from its simplest instance and incorporating one element of additional complexity at a time. Also for clarity, throughout this gradual development we make a number of statements without proof; these are all proved in Section \ref{sec:theoretical}.

When we omit the superscript $e$ in the random variables $Y^{e}, Z^{e}, X^{e}$, the source environment $s$ is implicitly understood, and all statements involving probabilities, independence or conditional independence, refer to $\p_{s}$. In this section, we assume that under $\p_e$, $(\Y, \Z, \X)$ follows a multivariate  Gaussian distribution.

We seek a linear reduction $W=A^\top (X - \E[X])$, with $A\in \mathbb R^{d_X\times d}$, and a linear prediction for $Y$ based on $W$. Since any invertible function of $W$ is equivalent to $W$ for prediction purposes, we require that $\Cov(W_i, W_j)=\delta_{ij}$.

The Gaussian assumption renders many calculations explicit and, as we shall see, leads to a close expression for $W$. In particular, when $(\Y, \Z)$ is Gaussian, the barycenter $\perpT{Z}{Y}=T(\Z, \Y)$, coincides with the residual of linear regression, except for  the intercept,
 \begin{equation*}
 	\label{T_baticentro}
 	T(\Z,\Y) = \Z - \Sigma_{\Z\Y} \Sigma_\Y^{-1} (\Y - \mathbb E(\Y)) = r(\Z,\Y)+ \mathbb E(\Z) ,
 \end{equation*}
 where  \( r(\Z, \Y) \) is the linear residual of \( Z \) with respect to \( Y \) (Lemma \ref{T.Gauss}). Also, under the Gaussian assumption, the implication in \eqref{one_way} goes both ways:
$$  W \indep Z \mid Y \ \Leftrightarrow \ W \indep \perpT{Z}{Y},  $$
(Lemma \ref{gauss_vuelta}), so using (\ref{WZ}) in lieu of (\ref{W,Z|Y}) involves no relaxation at all.

In order to present the derivations and results in their simplest form, we center and normalize $X$ under $p_{s}$, writing
$\tilde{X} = {\Sigma_X}^{-\frac{1}{2}} (X - \E[\X])$ and $\tilde{A} = {\Sigma_X}^{\frac{1}{2}} A$. Then $W=\tilde{A}^\top \tilde{X}$, and the requirement that $W^\top W = {\rm I}_d$ translates into $\tilde{A} \tilde{A}^\top = {\rm I}_d$. Then, for cleanliness, we
make the replacements $\tilde{A} \rightarrow A$ and  $\tilde{X} \rightarrow X$.

\subsection{One-dimensional response and confounder}
\label{d1}
When both  $\Z$, $Y$  are in $\mathbb R$, we seek a one dimensional reduction $W$.
Then $A \rightarrow a\in \mathbb R^{d_X}$, $W=a^\top X$, $\|a\|^2 = 1$, and the barycenter $\perpT{Z}{Y} = T(Z, Y)$ is given by the residual of linear regression,
$$
\perpT{Z}{Y} = Z - \Sigma_{ZY}\Sigma_Y^{-1} \left(\Y - \E[Y]\right),
$$
where both $\Sigma_{ZY}$ and $\Sigma_Y$ are scalars.
Since $W$ and $\perpT{Z}{Y}$ are jointly Gaussian, their degree of dependence can be  measured by their correlation:
$ W$ is independent of $\perpT{Z}{Y}$ iff $\Corr(W, \perpT{Z}{Y}) = a^\top \Sigma_{X \tilde{Z}} = 0$, where $\tilde{Z} = \Sigma_{\perpT{Z}{Y}}^{-1/2} \perpT{Z}{Y}$, so we seek to minimize $(a^\top   D)^2$ subject to $\left\|a\right\|^2 = 1$, with $D = \Sigma_{X \tilde{Z}}$.

Let us now turn our attention to the requirement that $W$ should be maximally informative on $Y$. In the Gaussian case, maximizing $\mathbb D(Y, W)$ over $W$ is equivalent to minimizing the expected square error of the regression of $Y$ against $W$ (Lemma \ref{R_cuad}). Both yield
\begin{equation*}
\label{C_matrix}
\max_{a} \left(a^\top   C\right)^2, \quad \mbox{where}\quad C = \Sigma_{X Y}.
\end{equation*}
Combining our two goals, predictive sufficiency and conditional invariance, into a single objective function yields the problem
\begin{equation*}
a^* = \arg\max_{\|a\|^2=1} L_\lambda(a)
\end{equation*}
where
\begin{equation}
    L_\lambda(a) = \frac{1 - \lambda}{\Sigma_Y}\ \left(a^\top C\right)^2 - \lambda\  \left(a^\top  D \right)^2 , \quad 0 \le \lambda < 1, \quad C = \Sigma_{X Y}, \quad D = \Sigma_{X \tilde{Z}}.
    \label{Problem_1}
\end{equation}

The parameter $\lambda$ quantifies the relative weight attached to the two goals, with $\lambda = 0$ yielding the OLS solution. The inclusion of the variance $\Sigma_Y$ in the first term of $L_\lambda$ amounts to measuring the regression's relative mean square error, so that the two terms weighted by $\lambda$ and $(1 - \lambda)$ range between zero and one.

The two terms in $L_\lambda$ admit a geometrical interpretation: the first one is maximized when $a$ is aligned with $C=\Sigma_{XY}$, and the second when $a$ is orthogonal to $D=\Sigma_{X\tilde Z}$, so maximizing $L_\lambda$ yields a vector that balances (as quantified by $\lambda$) being parallel to $C$ and orthogonal to $D$. Such vector must lie in the plane spanned by $C$ and $D$, so it is fully determined by one angle.

Better still, problem (\ref{Problem_1}) can be solved in closed form (Lemma \ref{solution}). Its solution $a^\ast $ is the normalized eigenvector corresponding to the largest eigenvalue of the rank two, symmetric matrix
\begin{equation*}
\label{H_matrix}
H = \frac{1 - \lambda}{\Sigma_Y}\ C C^\top - \lambda\ D D^{\top} .
\end{equation*}
Since typically one only has access to a sample $(\y_i, \z_i, \x_i)$, $i=1, \ldots, n$ drawn from the source distribution $\p_{s}$, we use this to estimate the optimal direction through a plug in procedure,  replacing the expected values that define $C$ and $D$ by empirical means.
Having found $a^*$ and hence $W$, we predict out of sample with the coefficient of the linear regression of $Y$ on $W$, once the new target observations $x$ are centered and normalized with the empirical mean and  covariance matrix constructed in source, i.e. with $\x_i$, $i=1, \ldots, n$. See Subsection \ref{pty_new} on how to improve this procedure when the marginal $p_t(y)$ is known.

\subsection{Unknown confounding factor and observed contextual variable}
\label{surrogate}
The procedure developed above uses measurements of the confounding variable $Z$ in the training population. In real scenarios though, one often does not even know what the confounding variables are, forget about measuring them; one is merely aware that such variables may exist. To address this issue, we extend our model to include an observed, \emph{contextual variable} $S$.

For illustration, consider the following triplet $(Z,S,Y)$ and observed features $(X_1,X_2)$:
\[
(Z,S,Y) \leftarrow \mathcal{N}(0,\Sigma^e_{ZSY}),
\quad
X_1 \leftarrow Z + \mathcal{N}(0, \sigma^2_1),
\quad
X_2 \leftarrow Y - Z + \mathcal{N}(0, \sigma^2_2),
\]
with the same structural equations for the observed variables $(X_1, X_2)$ as in Section \ref{toy_example}, but with confounder $Z$ not observed, leaving us only $S$ to use as a surrogate.

In the sequel, we postulate  that
\begin{equation}
    \label{con_S}
    \X\mid (\Y, \Z, S)\sim \X\mid (\Y, \Z)\;, \quad \mbox{ for all possible distributions $p_e$} ,
\end{equation}
i.e.  $S$ can affect $\X$ only through $\Y$ and $\Z$ (This postulate simply extends the definition of $Z$  to include any truly confounding component of $S$.)
Then we assume the following factorization for the joint distribution $\p_e$:
\begin{equation*}
    \label{model.with.s}
    \p_e(\y, \z, s, \x)=\p_e(\y)\; \p_e(\z,s\mid \y)\; \p (\x\mid \z, \y), \quad\mbox{for $e\in \{s, t\}$.}
\end{equation*}
Note that, integrating over $s$, we recover model \eqref{model} for $(Y, Z, X)$. We use $S$ as a surrogate for $Z$, requiring $S$, not $Z$, to be independent of $W$ conditioned on $Y$:
\begin{equation*}
 W \indep S \mid Y.
 \label{W,S|Y}
\end{equation*}
To justify this requirement, notice first that it does not impose any additional constraint on $W$, since, under condition \eqref{con_S},  Lemma \ref{ZtoS} establishes that
$$ W \indep Z \mid Y \ \Rightarrow \ W \indep S \mid Y. $$

Thus the condition that $W \indep S \mid Y$ is a relaxation of $W \indep Z \mid Y$.
Moreover, Lemma \ref{equiv_gauss} shows that if the correlation between $Z$ and $S$ does not vanish, then we also have
$$ W \indep S \mid Y \ \Rightarrow \ W \indep Z \mid Y, $$
making our replacement not a relaxation at all, just an equivalent -but implementable- requirement.

In view of this, it is clear how to proceed. We pose again the problem in (\ref{Problem_1}), with $C=\Sigma_{XY}$  as before but with
$D = \Sigma_{X \tilde{S}}$.
Then the optimal solution $a^\ast$ adopts the same closed form.
 In what follows,  we continue to use the surrogate $S$ in lieu of the confounder $Z$, including in $S$ all observable components of $Z$.

\subsection{Multivariate variables and labels}
\label{multivariate}

Extending the methodology to account for more than one variable of each type, we have $Z \in \mathbb{R}^{d_Z}$, $S \in \mathbb{R}^{d_S}$ and $Y \in \mathbb{R} ^{d_Y}$, while the dimension $d$ of the linear reduction $W$ is chosen by the practitioner. Thus  $W=A^\top  \X$, with $A^\top A={\rm I}_d$.  The reason to consider all components of $Y$ at once and not one at a time, is that we seek a single set of optimal features $W \in \mathbb{R}^{d}$ to predict them all, in a causal instance of dimensionality reduction, with $d \le \min(d_Y, d_X)$. Since $(\Y, S)\in \mathbb R^{d_Y\times d_S}$ is Gaussian, the barycenter $\perpT{S}{Y}=T(S, \Y)$ still agrees with the residual of linear regression.

For scalar $S_Y$ and $W$, we could simply penalize the square of their correlation. Now that they are vectorial though, the pairwise correlation between their components make up a rectangular matrix, and simply adding the squares of its components is not invariant under linear transformations. This is why, for any random vectors $U \in \R^{d_u}$ and $V \in  \R^{d_v}$, we define as a measure of their correlation

\begin{equation*}
 \mathcal{C}(U, V) = \frac{1}{\delta_{uv}} \|\Sigma_U^{-1/2} \Sigma_{UV}\Sigma_V^{-1/2}\|_2^2 , \quad \delta_{uv} = \min\left(d_u, d_v\right),
 \label{Corr}
\end{equation*}
a quantity proportional to the squared Frobenius norm of the covariance between the corresponding standardized variables, which can be easily shown to be invariant under invertible linear transformations of $U$ and $V$. Defining $\tilde{S} = \Sigma_{\perpT{S}{Y}}^{-\frac{1}{2}} \left(\perpT{S}{Y} - \E[\perpT{S}{Y}]\right)$, we have

$$ \mathcal{C}(W, S_Y) =  \frac{1}{\delta_{ws}} \left\|\Sigma_{ W\tilde{S}}\right\|_2^2 = \frac{1}{\delta_{ws}} \ \left\|A^\top D\right\|_2^2, \quad D = \Sigma_{X \tilde{S}}. $$

In order to quantify the predictive capability of \(W\) for \(Y\), we compute \(\mathbb{D}(Y, W)\), which we defined above as the cost of transporting \(Y\) to its barycenter \(Y_W\). The larger \(\mathbb{D}(Y, W)\), the better \(W\) serves as a predictor for \(Y\). As shown in Lemma~\ref{R_cuad}, maximizing \(\mathbb{D}(Y, W)\) over \(W\) is equivalent to minimizing the squared Frobenius norm of the residual from the linear regression of \(Y - \mathbb{E}[Y]\) on \(W\).

Then, for each fixed $\lambda\in (0,1)$, the full problem reads

\begin{equation}
\max_{A^\top A={\rm I}_d}  L (A) = \frac{1 - \lambda}{\text{tr}(\Sigma_Y)} \ \left\|A^\top C\right\|_2^2 - \frac{\lambda}{\delta_{ws}} \ \left\|A^\top D\right\|_2^2,  \quad  C =\Sigma_{X Y}.
\label{L_multivar}
\end{equation}

Theorem \ref{theorem:closed form} solves this problem in closed form: the optimal $A$ has as columns the normalized eigenvectors of $H = \frac{1 - \lambda}{\text{tr}(\Sigma_Y)} \ C C^\top - \frac{\lambda}{\delta_{ws}} \ D D^\top$ corresponding to its $d$ largest eigenvalues.

Notice that we do not need to pick the desired number of features $d$ before hand: we can base this choice on the decay of the eigenvalues of $H$, or alternatively, when some labels for the target distribution are available, on cross-validation, which is rendered particularly economical by the fact that the solution is computed at once for all values of $d$.

\subsection{Categorical labels}
\label{categorical}

We have so far addressed real outcomes $Y$, yet many applications require the prediction of categorical labels, discrete random variables $Y$ taking values in $\{y_1, \ldots, y_k\}$.  It turns out that with only slight variations, the proposed methodology applies to such discrete labels as well. The methodology selects a linear dimension reduction, after which any classification method can be trained with source data, adapted to the target when the marginal distribution $p_t(y)$ is known (Subsection \ref{pty_new}).

The first variation regards the solution of the optimal transport barycenter problem.
When the conditional distributions $S\mid Y=y$ are Gaussian, $S\mid Y=y \sim \mathcal N(\mu_Y, \Sigma_y)$, their barycenter is also Gaussian with mean $\E(S)$ and covariance matrix given by the only positive definite matrix $\Sigma$ satisfying the equation
$ \Sigma=\sum_{y}p_Y(y) (\Sigma^{1/2}\Sigma_y \Sigma^{1/2})^{1/2}$ \cite{agueh2011barycenters, alvarez2016fixed}. The solution to the corresponding optimal transport barycenter problem is
$$ \perpT{S}{Y} = T(S, Y) = \Sigma^{1/2}\Sigma_Y^{-\frac{1}{2}} \left(S - \mu_Y\right) +\mathbb E(S), \quad \mbox{so} \quad \tilde{S} = \Sigma_{S_Y}^{-1/2}(S_Y-\mathbb E[S_Y]) = \Sigma_Y^{-\frac{1}{2}} \left(S - \mu_Y\right). $$

After this, the penalization of conditional dependence between $W$ and $S$ by means of the correlation between $\perpT{S}{Y}$ and $W$  adopts the same form as before,
$$ \|A^\top D \|_2^2, \quad \mbox{with} \quad D =
\Sigma_{\X \tilde{S}} .
$$

A natural approach to quantifying the explanatory power of $W$ on $Y$ is to assess how much the conditional distributions
$W \mid Y = y_j$ differ among the $\{y_j\}$.
To this end, we maximize over $A$ a measure of explanatory power defined by the cost of the OTBP,
$$
\mathbb D(W, Y) = \mathbb{E}\big[c\left(W, W_Y\right)\big],
$$
which extends the notion of variance to the space of probability distributions.

Assuming that $W \mid Y$ has a Gaussian distribution with uniform covariance matrix $\Sigma$, the setting underlying linear discriminant analysis, we have that
$$ W_Y = T(W, Y) = W - \mathbb E[W\mid Y] + \mathbb E[W]$$
(Lemma \ref{T.C.mean}), and so
\begin{equation}
\mathbb D(W, Y) = \sum_j p\left(y_j\right) \big\|\mathbb E\left[W\mid Y=y_j\right] - \mathbb E(W)\big\|^2 = \left\|A^\top C\right\|_2^2,
\label{D_discr}
\end{equation}

where column $j$ of $C$ is given by $ \sqrt{p(y_j)} \left(\mathbb E[X \mid Y = y_j] -\mathbb E(\X)\right)$. Here $p(y)$ stands for $p_t(y)$ when it is known  or the default $p_s(t)$ otherwise. Then,  in this new setting, our problem adopts the same form as before:
$$ \arg\max_{A^\top A={\rm I}_d}  L (A) = \frac{1 - \lambda}{d} \left\|A^\top C\right\|_2^2 - \frac{\lambda}{\delta_{ws}} \ \left\|A^\top D\right\|_2^2, \quad \delta_{ws} = \min(d, d_S), $$
with identical closed form solution.

\subsection{Using knowledge of $p_t(Y)$, the marginal distribution of the response in target.}
\label{pty_new}

Our procedure enforces through penalization, using only source data, the condition that $W \indep Z \mid Y$, which makes the conditional distribution $W \mid Y$ invariant under environmental changes. If the marginal distribution of $Y$ in target, $p_t(y)$, agrees with $p_s(y)$, this is all we need, since then the joint distribution of $(W, Y)$ is the same in source and target.

Frequently though, the marginal distribution of $Y$ does depend on the environment. For instance, the incidence of a particular disease among patients may depend on a hospital's location. If this marginal distribution in target is not known, then there is little that one can do other than assuming by default that $p_t(y) = p_s(y)$. However, $p_t(y)$ is often known, at least to some degree. In the example above, the local incidence of a disease may be known, even though the tests providing the predictor $X$ have not been deployed before in that hospital, only in the hospital corresponding to our source.

The case where $p_t(y) \ne p_s(y)$ is known raises two related questions: how to use $p_t(y)$ to make predictions in target from $W$, and how to tailor the predictive-sufficiency component of the objective function so that $W$ is optimized for predictions in target, not in source.

A first step for making actual predictions, independently of whether $p_t(y)$ is known or not, is to compute $w= W(x)$ for each new target observation $x$. The only subtlety associated with this task is that we have centered and normalized $X$ in the source, so for each observation $x$ from the target distribution, we must define
$$ w = W(x) = A^\top \Sigma_X^{-\frac{1}{2}} \left(x - \E[X]\right).$$
If $p(w \mid y)$ is invariant across environments, we have at the target domain
\begin{equation}
\label{joint_pt}
p_t(w, y) = p_t(y)\; p(w \mid y) = p_s(w,y)\frac{p_t(y)}{p_s(y)}.
\end{equation}
It follows that the marginal distribution $p_t(y)$ in the target set, if known, gives us full knowledge of $p_t(w,y)$.
Moreover, we can transform the available source samples $\{w_i, y_i\} \sim p_s(w, y)$ into virtual weighted samples of the
target distribution $p_t(w, y)$, simply by assigning them the weights $\{\gamma_i = \gamma(y_i)\}$, with $\gamma(y) = p_t(y)/p_s(y)$.

For classification tasks, i.e., for the case of a categorical, discrete label $\Y$,  Bayes rule assigns to each possible observed $w$ the label $\etiqueta(w)$ which maximizes the posterior probability:
\begin{equation}
\label{etiquetas}
\etiqueta(w) = y_{j_*}, \quad \mbox{with} \quad j_* = \arg\max_j\; p(w\mid Y=y_j) p_t(y_j).
\end{equation}
When the conditional distribution $W \mid  Y$ is assumed Gaussian with covariance $\Sigma$ independent of $Y$, as in linear discriminant analysis, i.e., $W\mid Y=y_j\sim \mathcal N(\mu_j, \Sigma)$, $\mu_j=\mathbb E(W\mid Y=y_j)$, then
$$ j_* = \arg\min_j \left\{
\frac{1}{2}
(w-\mu_j)^\top \Sigma^{-1}(w-\mu_j) - \log\left(p_t\left(y_j\right)\right)\right\}. $$
Moreover, we should use the $\{p_t\left(y_j\right)\}$ also to compute $C$ in (\ref{D_discr}), so as to weight the separation among the various $\{\mathbb E\left[W\mid Y=y_j\right]\}$ by their representation in the target, not the source distribution. This way, the very selection of $W$ is tailored to the target distribution to which it will be applied.

Similar considerations apply to the case with continuous  $Y$ and jointly Gaussian distributions.
We seek the linear predictor for $Y$ based on $W$ that minimizes the quadratic mean error in target,
$$
\min_{\beta} \mathbb E_{p_t}\left(\tilde{Y} - \beta \tilde{W}\right)^2 = \mathbb E_{p_s}\left[\left(\tilde{Y} - \beta \tilde{W}\right)^{2} \gamma(Y)\right],
$$
where
$$ \tilde{Y} = Y - \E_{p_s}[\gamma(Y) Y], \quad \tilde{W} = W - \E_{p_s}[\gamma(Y) W],  $$
with solution
\begin{equation}
\label{beta_with_pt}
\beta = \Sigma^\gamma_{YW} {\Sigma^\gamma_W}^{-1}, \quad
\Sigma^\gamma_{YW} = \E_{p_s}\left[\gamma(Y) \tilde{Y} \tilde{W}^\top \right], \quad \Sigma^{\gamma}_W = \E_{p_s}\left[\gamma(Y) \tilde{W} \tilde{W}^\top \right] .
\end{equation}

As we did for categorical labels, we could also use the weights $\{\gamma_i\}$ to tune $W$ itself to be optimal for predicting $Y$ in target: we would need to maximize over $A$
$$ (1 - \lambda) \left\| {\Sigma^{\gamma}_W}^{-1/2}\Sigma^{\gamma}_{WY} \right\|_2^2  - \lambda \| AD \|_2^2 , \quad
W = A^\top X.$$
Unfortunately though, this is a nonlinear problem for $A$, lacking a closed-form solution like the one described before in terms of the eigenvectors of a known matrix, one of the main bonuses of having restricted attention to Gaussian scenarios. In view of this, for continuous labels $Y$, we postpone the development of target-adapted invariant features $W$ to a follow-up article, which will consider general joint distributions and nonlinear functions.

\section{Theoretical grounding}
\label{sec:theoretical}

This section develops the analytical foundation of the methodology, through a sequence of discussions and proofs, some applying to the general problem of label estimation in the presence of confounding variables, others restricted to Gaussian or categorical variables.

\subsection{Conditional independence and contextual variables}

We start with a justification of our procedure's main pillar, which allows us to replace the requirement that $W$ not be confounded by $Z$, i.e. that $W \mid (Y, Z) = W \mid Y$, by the much more readily implementable uncorrelation between $W$ and $S_Y = T(S, Y)$, where the contextual variables $S$ act as surrogates for the confounders $Z$, plain uncorrelation replaces conditional independence and the barycenter $S_Y$ can be computed off-line. The path to this statement comprises a few lemmas.

\begin{lemma}
     $$ W \mid Y, Z = W \mid Y \ \Leftrightarrow \ W \indep Z \mid Y. $$
\end{lemma}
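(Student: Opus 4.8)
The plan is to derive both implications directly from the measure-theoretic definitions: $W\indep Z\mid Y$ means $P(W\in A,\,Z\in B\mid Y)=P(W\in A\mid Y)\,P(Z\in B\mid Y)$ almost surely for all measurable $A,B$, whereas $W\mid Y,Z = W\mid Y$ means that the $\sigma(Y)$-measurable map $P(W\in A\mid Y)$ is a version of $P(W\in A\mid Y,Z)$ for every measurable $A$. Since the model of the paper admits joint densities, I would first record the one-line argument in that case: writing $p(w\mid y,z)=p(w,y,z)/p(y,z)$ and $p(w\mid y)=p(w,y)/p(y)$, the identity $p(w\mid y,z)=p(w\mid y)$ is --- after multiplying by $p(z\mid y)$ --- precisely the factorization $p(w,z\mid y)=p(w\mid y)\,p(z\mid y)$ defining $W\indep Z\mid Y$, and this manipulation is reversible, giving both directions at once.

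For completeness, and to match the generality in which the lemma is stated, I would also give the density-free argument. For the ``$\Leftarrow$'' direction, assuming the factorization I would check that $P(W\in A\mid Y)$ satisfies the defining property of $P(W\in A\mid Y,Z)$, namely $\int_C P(W\in A\mid Y)\,dP=P(\{W\in A\}\cap C)$ for every $C\in\sigma(Y,Z)$; by a monotone-class argument it suffices to take $C=\{Y\in B_1\}\cap\{Z\in B_2\}$, and then conditioning on $Y$ followed by the factorization identity turns the left-hand side into $\E\!\left[1\{Y\in B_1\}\,P(W\in A,\,Z\in B_2\mid Y)\right]$, which equals the right-hand side. For the ``$\Rightarrow$'' direction, assuming $P(W\in A\mid Y,Z)=P(W\in A\mid Y)$, I would compute $P(W\in A,\,Z\in B\mid Y)=\E\!\left[1\{Z\in B\}\,P(W\in A\mid Y,Z)\mid Y\right]=\E\!\left[1\{Z\in B\}\,P(W\in A\mid Y)\mid Y\right]=P(W\in A\mid Y)\,P(Z\in B\mid Y)$, where the last step pulls the $\sigma(Y)$-measurable factor out of the conditional expectation.

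There is no genuine obstacle: the only thing requiring care is routine measure-theoretic bookkeeping --- handling versions of conditional expectations and reducing the verification to a generating $\pi$-system. In the Gaussian setting that is the focus of this article even that is unnecessary, since the whole equivalence collapses to the density identity above. One could alternatively simply invoke the standard reference \cite{dawid1979conditional}, already cited in the main text, where this is one of the defining properties of conditional independence.
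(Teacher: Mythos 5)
Your proposal is correct and its core argument is the same as the paper's: the paper's proof is exactly your one-line density manipulation, writing $p(w,z\mid y)=p(w\mid z,y)\,p(z\mid y)$ and comparing with the factorization $p(w,z\mid y)=p(w\mid y)\,p(z\mid y)$ to get the equivalence in both directions. The additional density-free argument via conditional expectations (checking the defining property of $P(W\in A\mid Y,Z)$ on a generating $\pi$-system for one direction, and pulling the $\sigma(Y)$-measurable factor out of the conditional expectation for the other) is also correct, but goes beyond what the paper does and is not needed in its Gaussian setting.
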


\begin{proof} Observe that,
$$p(w, z \mid y) = p(w \mid z, y) p(z \mid  y) \text{ and } W \indep Z \mid Y \Leftrightarrow p(w, z \mid y) = p(w \mid y) p(z\mid y)$$
so,
$$ W \indep Z \mid Y\ \Leftrightarrow\  p(w \mid z, y) = p(w \mid y). $$
\end{proof}
Rather than assuming knowledge of the confounders $Z$, we have access to contextual variables $S$, which may affect $X$ only through $Y$ and $Z$, i.e. satisfy the condition that $X\indep S\mid (Y, Z)$, so also $W\indep S\mid (Y, Z)$.

\begin{lemma}
\label{ZtoS}
   Let $(Y, S, Z, W)$ be a random vector. If $W\indep S\mid (Y, Z)$ and $W\indep Z\mid Y$, then $W\indep S\mid Y$.
\end{lemma}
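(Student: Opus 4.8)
The plan is to mimic the proof of the previous lemma, reducing the claim to an elementary marginalization over $Z$. First I would restate the two hypotheses in terms of conditional densities: $W \indep S \mid (Y,Z)$ is equivalent to $p(w \mid s, y, z) = p(w \mid y, z)$, while $W \indep Z \mid Y$ is equivalent to $p(w \mid y, z) = p(w \mid y)$. Composing the two identities yields
\[
p(w \mid s, y, z) = p(w \mid y)
\]
for (almost) every $(s,y,z)$.

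Next I would integrate this identity against the conditional law of $Z$ given $(S,Y)$:
\[
p(w \mid s, y) = \int p(w \mid s, y, z)\, p(z \mid s, y)\, dz = p(w \mid y)\int p(z \mid s, y)\, dz = p(w \mid y),
\]
the last equality because $p(\cdot \mid s, y)$ integrates to one. Since the right-hand side does not involve $s$, this says exactly that $W \mid (S,Y) = W \mid Y$, i.e. $W \indep S \mid Y$, which is the desired conclusion.

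I do not expect any genuine obstacle here: the statement is in fact an instance of the semi-graphoid calculus, since the contraction axiom applied to $W \indep Z \mid Y$ and $W \indep S \mid (Z,Y)$ gives $W \indep (Z,S) \mid Y$, from which $W \indep S \mid Y$ follows by decomposition. The only mild subtlety is the measure-theoretic justification when densities are not available; this can be handled by running the same marginalization argument with conditional expectations $\E[g(W) \mid S, Y]$ of bounded test functions $g$, or simply by appealing to the graphoid axioms, which hold without density assumptions. Since the paper works in the Gaussian setting, the density-level computation above is fully adequate.
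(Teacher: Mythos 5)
Your proof is correct and takes essentially the same route as the paper: both arguments apply the two hypotheses after conditioning on $(Y,Z)$ and then integrate $Z$ out, the paper doing this density-free via the tower property applied to $\E\bigl[g(W)h(S)\mid Y\bigr]$ for bounded test functions $g,h$ --- which is exactly the measure-theoretic variant you sketch at the end --- while you run the identical marginalization at the level of conditional densities, which is adequate in the Gaussian setting considered here.
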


\begin{proof}
Because of the  conditional independence between $W$ and $S$ given $(Y, Z)$,
\begin{eqnarray*}
&&\mathbb E(g(W)h(S)\mid Y)=\mathbb E(\mathbb E(g(W)h(S)\mid (Y, Z))\mid Y)=\\ &&\mathbb E(\mathbb E(g(W)\mid (Y, Z))\ \mathbb E(h(S)\mid (Y, Z))\mid Y)=
\mathbb E(g(W)\mid Y)\mathbb E (h(S)\mid Y).
\end{eqnarray*}
\end{proof}
It follows that the condition that $W \indep S \mid Y$ is a relaxation of $W \indep Z \mid Y$. The following lemma quantifies to what degree this relaxes the original constraint when $W$ is linear in $Z$. This is the case in our examples, since we adopt for $W$ a linear function of $X$, and $(X, Z) \mid Y$ is  Gaussian, so $X$ depends on $Z$ only through the conditional expectation $\E[X \mid Z,Y]$, which is linear in $Z$.

\begin{lemma}
\label{equiv_gauss}
If $W = B(Y) Z + \tilde{W}$, where $B(Y) \in \R^{d \times d_Z}$ and $\tilde{W} \in \R^{d}$ is a random variable that may depend on $Y$ but is conditionally independent of $(S, Z)$ given $Y$, we have

\begin{enumerate}

\item For all values of $Y$ for which the nullspace of the conditional covariance $\Sigma_{SZ}(Y) = \mathrm{Cov}(S,Z\mid Y)$ contains only the zero vector,
$$ W \indep S \mid Y \ \Rightarrow \ W \indep Z \mid Y. $$
\item More generally, if $ W \indep S \mid Y$, then $W$ can only be conditionally dependent on the projection of $Z$ onto the nullspace of $\Sigma_{SZ}(Y)$, i.e.
$$ W \indep S \mid Y \ \Rightarrow \ W \indep P(Z) \mid Y, $$
where $P(Z)$ is the projection of $Z$ onto the subspace orthogonal to the nullspace of $\Sigma_{SZ}(Y)$, i.e. onto the range of $\Sigma_{ZS}(Y)$.
\end{enumerate}
\end{lemma}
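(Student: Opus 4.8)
The statement to prove is Lemma~\ref{equiv_gauss}: assuming $W = B(Y)Z + \tilde W$ with $\tilde W \indep (S,Z)\mid Y$, conditional uncorrelation/independence of $W$ and $S$ given $Y$ forces $W$ to be conditionally independent of the part of $Z$ that lives in the range of $\Sigma_{ZS}(Y)$, and in particular, if $\Sigma_{SZ}(Y)$ has trivial nullspace, $W\indep Z\mid Y$ outright. The natural strategy is to work conditionally on a fixed value $Y=y$ throughout, so that everything becomes a statement about jointly Gaussian vectors (since $(X,Z,S)\mid Y$ is Gaussian and $\tilde W$ is a linear-Gaussian gadget given $Y$), and for Gaussians conditional independence given $Y$ is equivalent to conditional uncorrelation given $Y$.

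**Key steps, in order.**

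First I would fix $y$ and write $\Sigma_{WS}(y) = \Cov(W,S\mid Y=y)$. Using the decomposition $W = B(y)Z + \tilde W$ and the conditional independence $\tilde W\indep S\mid Y$, this collapses to $\Sigma_{WS}(y) = B(y)\,\Sigma_{ZS}(y)$. So the hypothesis $W\indep S\mid Y$ (equivalently, by Gaussianity, $\Sigma_{WS}(y)=0$) says precisely that $B(y)\,\Sigma_{ZS}(y) = 0$, i.e. every row of $B(y)$ is orthogonal to the range of $\Sigma_{ZS}(y)$; equivalently, the range of $B(y)^\top$ is contained in the nullspace of $\Sigma_{SZ}(y) = \Sigma_{ZS}(y)^\top$. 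Second, let $P$ be the orthogonal projection of $\R^{d_z}$ onto the range of $\Sigma_{ZS}(y)$, and write $Z = P(Z) + (I-P)(Z)$. Then $B(y)Z = B(y)P(Z) + B(y)(I-P)(Z)$, and the first identity forces $B(y)(I-P) = 0$... wait, more carefully: $B(y)\Sigma_{ZS}(y)=0$ means $B(y)$ annihilates the range of $\Sigma_{ZS}(y)$, i.e. $B(y)P = 0$, so in fact $W = B(y)(I-P)(Z) + \tilde W$ — $W$ depends on $Z$ only through $(I-P)(Z)$, the projection onto the nullspace of $\Sigma_{SZ}(y)$. Third, I would then compute $\Cov(W, P(Z)\mid Y=y)$: since $W = B(y)(I-P)Z + \tilde W$ and $\tilde W\indep Z\mid Y$, this equals $B(y)(I-P)\,\Sigma_{Z}(y)\,P^\top$. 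This need not vanish for a general Gaussian $\Sigma_Z(y)$ — so to get the claimed $W\indep P(Z)\mid Y$ I need to be a bit more careful about what $P(Z)$ means. Re-reading the statement: $P(Z)$ is the projection of $Z$ onto the range of $\Sigma_{ZS}(Y)$, and the claim is $W\indep P(Z)\mid Y$. Since $W$'s only $Z$-dependence is through $(I-P)Z$, and $((I-P)Z, PZ)$ are jointly Gaussian given $Y$, conditional independence of $W$ from $PZ$ given $Y$ holds iff $(I-P)Z \indep PZ$ given $Y$ — which is false in general. So the correct reading must be that the conclusion of part~2 is about conditional independence after further conditioning, or that one reparametrizes; I will need to look at exactly how the authors phrase "can only be conditionally dependent on the projection onto the nullspace." The cleanest true statement is: $W$ is a function of $(I-P)Z$ and $\tilde W$ only, hence $W\indep Z\mid (Y, (I-P)Z)$, and when the nullspace is trivial $(I-P)=0$ so $W = \tilde W$ is $Z$-free given $Y$, giving part~1 immediately.

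**The main obstacle.**

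The routine part — reducing to $B(y)\Sigma_{ZS}(y)=0$ and reading off part~1 — is quick. The delicate point, and where I expect to spend the most care, is the precise formulation and proof of part~2: pinning down in exactly what sense "$W$ can only be conditionally dependent on $P(Z)$" and making sure the Gaussian conditional-independence $\Leftrightarrow$ uncorrelation equivalence is invoked correctly (it requires joint Gaussianity of $(W, S)\mid Y$, which follows from $(X,Z,S)\mid Y$ Gaussian plus $W$ linear in $X$ and $\tilde W$ linear-Gaussian given $Y$, but this should be stated). I would handle it by proving the clean structural statement $W = B(y)(I-P)(Z) + \tilde W$ and then phrasing the conditional-independence conclusion as: conditioning additionally on $(I-P)(Z)$ (equivalently on the component of $Z$ in the nullspace of $\Sigma_{SZ}(Y)$) renders $W$ independent of $Z$; and when that nullspace is $\{0\}$, no extra conditioning is needed, which is part~1.
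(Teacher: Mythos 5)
Your core computation coincides with the paper's own proof: conditioning on $Y$, $\Cov(S,W\mid Y)=\Sigma_{SZ}(Y)B(Y)^\top$ because $\tilde W$ is conditionally independent of $S$ given $Y$, so $W\indep S\mid Y$ forces the rows of $B(Y)$ into the nullspace of $\Sigma_{SZ}(Y)$; part~1 follows since a trivial nullspace gives $B(Y)=0$ and $W=\tilde W$, which is by hypothesis conditionally independent of $(S,Z)$ given $Y$; and in general $B(Y)$ annihilates the range of $\Sigma_{ZS}(Y)$, so $W=B(Y)(I-P)Z+\tilde W$ with $P$ the projection onto that range. Up to this point you and the paper argue identically.

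Your hesitation about part~2 is justified, and it identifies an imprecision in the lemma's statement rather than a defect in your argument. The paper's proof stops at exactly the structural identity you propose, namely $W=B(Y)(I-P)Z+\tilde W$ (``$W$ depends on $Z$ only through its projection onto the nullspace''), and never verifies the displayed claim $W\indep P(Z)\mid Y$. As you compute, that claim would require $\Cov\bigl(W,P(Z)\mid Y\bigr)=B(Y)(I-P)\,\Sigma_Z(Y)\,P$ to vanish, which fails whenever the two orthogonal components of $Z$ are conditionally correlated: take scalar $S$, $Z=(Z_1,Z_2)$ with $\Cov(S,Z_2\mid Y)=0\neq\Cov(S,Z_1\mid Y)$ and $\Cov(Z_1,Z_2\mid Y)\neq 0$, and $W=Z_2+\tilde W$; then $W\indep S\mid Y$ holds (zero conditional covariance plus joint conditional Gaussianity), $P(Z)$ is essentially $Z_1$, yet $\Cov(W,Z_1\mid Y)=\Cov(Z_2,Z_1\mid Y)\neq 0$. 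So the literal conclusion of part~2 is false without an extra hypothesis (e.g.\ that $P Z$ and $(I-P)Z$ are conditionally uncorrelated given $Y$), while part~1 is unaffected because there $B(Y)=0$. Your proposed reading --- $W$ is a function of $(I-P)Z$ and $\tilde W$ alone, hence $W\indep Z\mid\bigl(Y,(I-P)Z\bigr)$, with part~1 as the special case of trivial nullspace --- is the correct formulation and is precisely what the paper's proof actually delivers.
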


\begin{proof}

\begin{align*}
W \indep S \mid Y \Rightarrow \quad 0 \
  &= \mathrm{Cov}(S, W \mid Y)
  = \mathrm{Cov}\bigl(S, B(Y) Z + \tilde W \,\big|\, Y\bigr) \\
  &= \mathrm{Cov}(S, B(Y) Z \mid Y) + \mathrm{Cov}(S,\tilde W \mid Y)
  = \Sigma_{SZ}(Y) B(Y)^\top.
\end{align*}
It follows that, if the nullspace of $\Sigma_{SZ}(Y)$ is trivial, $B(Y) = 0$, so $W = \tilde W$,
which is by assumption conditionally independent of $(S,Z)$ given $Y$, so
$$
W \indep S \mid Y \;\Rightarrow\; W \indep Z \mid Y.
$$

More generally, let $ N(Y) := \mathrm{Null}\bigl(\Sigma_{SZ}(Y)\bigr) \subset \mathbb R^{d_Z}$.
Since $B(Y)^\top$ belongs to $\mathcal N(Y)$,
$B(Y)$ annihilates the orthogonal complement $\mathcal N(Y)^\perp$.
Writing
\[
Z = P_{\mathcal N_y}(Z) + (I - P_{\mathcal N_y})Z,
\]
where $P_{\mathcal N_y}$ denotes the orthogonal projection onto $\mathcal N(Y)$, we have
$
B(Y) Z = B(Y) P_{\mathcal N_y}(Z)
$,
and therefore
\[
W = B(Y) P_{\mathcal N_y}(Z) + \tilde W.
\]
Thus $W$ depends on $Z$ only through its projection $P_{\mathcal N_y}(Z)$.
\end{proof}

The requirement that $W \indep S \mid Y$  could be difficult to implement. The following two lemmas justify replacing it by a more easily implementable requirement:

\begin{lemma} Recall that $S_Y = T(S, Y) $. We have,
\label{relaxed_indep}
    $$  W \indep S \mid Y \ \Rightarrow \ W \indep \S_Y.$$

\end{lemma}

\begin{proof}
Since
$W \indep S \mid Y$ and conditional independence is preserved under
measurable transformations of $S$,
\[
W \indep S_Y \mid Y,
\]
so for all bounded measurable functions $f,g$,
\[
\E\big[ f(W)\,g(S_Y) \mid Y \big]
=
\E\big[ f(W) \mid Y \big] \,
\E\big[ g(S_Y) \mid Y \big]
\quad\text{a.s.}
\]
But $S_Y \indep Y$ by definition of the barycenter, so
\[
\E\big[ g(S_Y) \mid Y \big] = \E\big[ g(S_Y) \big]
\quad\text{a.s.},
\]
which replaced into the previous expression and taking its expectation over $Y$ gives
\[
\E\big[ f(W)\,g(S_Y) \big]
= \E\Big[ \E\big[ f(W)\,g(S_Y) \mid Y \big] \Big]
= \E\Big[ \E\big[ f(W) \mid Y \big] \Big] \, \E\big[ g(S_Y) \big].
\]
From the tower property
$
\E\Big[ \E\big[ f(W) \mid Y \big] \Big] = \E\big[ f(W) \big],
$
we obtain
\[
\E\big[ f(W)\,g(S_Y) \big]
= \E\big[ f(W) \big]\,\E\big[ g(S_Y) \big]
\]
for all bounded measurable $f,g$, i.e.
$W \indep S_Y$.
\end{proof}

This lemma shows that the condition that $W \indep S_Y$ is a relaxation of $ W \indep S \mid Y$.
Since solving the optimal transport barycenter problem for $S_Y = T(S, Y)$ involves only the data, not the unknown $A$, it can be performed off-line. Generally, plain independence is easier to enforce than conditional independence. Interestingly, for Gaussian random variables, the reciprocal implication also holds.

\begin{lemma}
\label{gauss_vuelta}
    If $(S, W) \mid  Y$ is Gaussian, then

$$ W \indep S_Y \ \Rightarrow \ W \indep S \mid Y . $$

\end{lemma}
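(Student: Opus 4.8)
The plan is to reduce the assertion to the vanishing of a conditional cross-covariance and then exploit the defining constraint $S_Y \indep Y$ of the barycenter. First I would record that, because $(S,Y)$ is (conditionally) Gaussian, the optimal transport map $T(\cdot,y)$ is for each fixed $y$ an invertible affine map of $S$ --- in the centered jointly Gaussian case simply $s \mapsto s - \Sigma_{SY}\Sigma_Y^{-1}y$ --- so $\sigma(S_Y,Y) = \sigma(S,Y)$ and conditional independence is unaffected by this change of variables: $W \indep S \mid Y$ if and only if $W \indep S_Y \mid Y$. Moreover $(W,S_Y)\mid Y$ is Gaussian, being an affine image of the Gaussian vector $(W,S)\mid Y$, so it suffices to show that $M(y) := \Cov(W, S_Y \mid Y = y)$ vanishes for $P_Y$-almost every $y$.

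Next I would use that, being the solution of the barycenter problem \eqref{otbp}, $S_Y$ satisfies $S_Y \indep Y$; hence the conditional law of $S_Y$ given $Y$ equals its (fixed) marginal law, and in particular $\E[S_Y\mid Y] = \E[S]$ and $\Var(S_Y\mid Y)$ are constant in $Y$. By the law of total covariance, $\Cov(W,S_Y) = \E[M(Y)]$ (the cross term vanishes since $\E[S_Y\mid Y]$ is constant), and the hypothesis $W \indep S_Y$ forces $\Cov(W,S_Y)=0$, so $\E[M(Y)] = 0$. In the fully jointly Gaussian situation that arises in the applications, where $W$ is linear in $X$ and $(X,S,Y)$ is jointly Gaussian, this already finishes the proof, since there $M(y) = \Sigma_{WS} - \Sigma_{WY}\Sigma_Y^{-1}\Sigma_{YS}$ does not depend on $y$ and must therefore be identically zero.

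The remaining work --- and the main obstacle --- is to upgrade $\E[M(Y)]=0$ to $M(Y)=0$ almost surely in the stated generality, where $M(y)$ may genuinely depend on $y$. Here I would return to the definition of independence and compute $\E\big[f(W)\,e^{t^\top S_Y}\big]$ by conditioning on $Y$ and using the exponential-tilting identity for Gaussian vectors: conditionally on $Y=y$, tilting by $e^{t^\top S_Y}$ shifts the conditional mean of $W$ by $M(y)t$ while leaving $\Var(W\mid Y)$ unchanged, and the normalizing factor $\E[e^{t^\top S_Y}\mid Y=y]$ is independent of $y$ precisely because $S_Y \indep Y$. Combined with $W \indep S_Y$, this shows that $W$ and $W + M(Y)t$ have the same law for every $t$; passing to characteristic functions gives $\E_Y\big[(e^{\,iu^\top M(Y)t}-1)\,g_u(Y)\big]=0$ for all $u,t$, where $g_u(y) = \exp\!\big(iu^\top \E[W\mid Y=y] - \tfrac12 u^\top \Var(W\mid Y=y)\,u\big)$ satisfies $|g_u|\le 1$. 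Fixing $u$ and $t$ and rescaling $t$, uniqueness of the Fourier transform for finite complex measures on $\R$ forces the measure $g_u\,dP_Y$ to be concentrated on $\{\,u^\top M(Y)t = 0\,\}$; rescaling $u$ and letting the scale tend to $0$ (dominated convergence, using $|g_u|\le 1$) then yields $P_Y\big(u^\top M(Y)t \ne 0\big)=0$, and ranging over a countable dense family of pairs $(u,t)$ gives $M(Y)=0$ a.s., hence $W \indep S_Y \mid Y$ and finally $W \indep S \mid Y$. This last step is exactly where the full strength of the independence $W \indep S_Y$ (beyond mere uncorrelatedness) enters, together with conditional Gaussianity and $S_Y \indep Y$; everything preceding it is bookkeeping.
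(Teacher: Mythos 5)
Your proof is correct, and it is in fact more general than the paper's own argument. The paper's proof is a three-line reduction: by Lemma \ref{T.C.mean}, $S = S_Y + \E[S\mid Y] - \E[S]$, so $\Cov(W,S\mid Y)=\Cov(W,S_Y\mid Y)$, and this is declared to vanish from $S_Y\indep Y$ together with $W\perp S_Y$ --- a step that is immediate only when the conditional cross-covariance is constant in $Y$, i.e.\ essentially in the jointly Gaussian situation arising in the application ($W=A^\top X$ with $(X,S,Y)$ Gaussian), where $\Cov(W,S_Y\mid Y)=\Cov(W,S_Y)$ because $\Cov(Y,S_Y)=0$; conditional Gaussianity then upgrades conditional uncorrelation to conditional independence. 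Your first two paragraphs reproduce exactly this reduction (invertibility of the affine map $T(\cdot,y)$, law of total covariance, constancy of $M(y)$ in the jointly Gaussian case), so up to there you and the paper coincide. What you add --- the exponential-tilting/characteristic-function argument showing that full independence $W\indep S_Y$ forces $M(Y)=0$ almost surely even when $M(y)$ varies with $y$ --- has no counterpart in the paper and genuinely covers the lemma as literally stated (only $(S,W)\mid Y$ Gaussian), at the price of a longer argument that uses independence rather than mere uncorrelation. One phrase needs care: Fourier uniqueness gives that the pushforward of the complex measure $g_u\,dP_Y$ under $y\mapsto u^\top M(y)t$ equals a multiple of $\delta_0$, which yields $\int_{\{u^\top M(y)t\neq 0\}} g_u\,dP_Y=0$, but not, because of possible cancellation in the complex density $g_u$, that this measure is ``concentrated'' on $\{u^\top M(Y)t=0\}$; fortunately the vanishing of that integral (applied with $\epsilon u$ in place of $u$, which leaves the set $\{u^\top M(Y)t\neq 0\}$ unchanged) is exactly what your dominated-convergence step as $\epsilon\to 0$ requires, so the argument closes after this small rewording.
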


\begin{proof} For Gaussian $S|Y\sim \mathcal N(\mu_{S}(Y),\Sigma_S(Y))$, Lemma \ref{T.C.mean} below provides an  explicit expression for $S_Y$:
\begin{equation}
\label{S_Y_condGauss}
S_Y = T(S, Y) = \left(\Sigma_{S_Y}\right)^{\frac{1}{2}} \left(\Sigma_S(Y)\right)^{-\frac{1}{2}}\left(S - \mu_{S}(Y)\right) + \mathbb E(S).
\end{equation}

Since this is linear in $S$ with $Y$-dependent coefficients, it follows that
\begin{itemize}
\item[(i)] it is enough to prove that $  W \indep S_Y \ \Rightarrow W \indep S_Y \mid Y$.

\item[(ii)] $(S_Y, W) | Y$ is also Gaussian, i.e. $(S_Y, W)\mid Y\sim \mathcal N\left(\mu_{(S_Y,W)}(Y), \Sigma_{(S_Y,W)}(Y)\right)$.
\end{itemize}

Since $S_Y$ and $Y$ are independent, $\mu_{S_Y}(Y)=\mu_{S_Y}=\mu_S$ and
$$
\Sigma_{(S_Y,W)}(Y)
= \begin{pmatrix} \Sigma_{S_Y} & \Sigma_{S_Y,W}(Y) & \\
\Sigma_{W,S_Y}(Y) & \Sigma_W(Y) \end{pmatrix} ,
$$
where
$$
\Sigma_{S_Y,W}(Y)=\Sigma_{S_Y}^{\frac{1}{2}} \Sigma_S(Y)^{-\frac{1}{2}}\Sigma_{S,W}(Y).
$$
Then,
$$
W\mid (S_Y,Y)\sim \mathcal N(\mu_W(S_Y,Y), \Sigma_W(S_Y,Y)) ,
$$
where,
$$
\mu_W(S_Y,Y)=\mu_W(Y)+\Sigma_{WS_Y}(Y)\Sigma_{S_Y}^{-1}(S_Y-\mu_{S_Y}) \defeq \mu_W(Y)+B(Y)\tilde{S}_Y,
$$
with $\tilde{S}_Y = S_Y-\mu_{S_Y}$, and
\begin{eqnarray*}
\Sigma_W(S_Y,Y)&=&\Sigma_{W}(Y)-\Sigma_{WS_Y}(Y)\Sigma_{S_Y}^{-1}\Sigma_{S_Y W}(Y)\\
&=&\Sigma_W(Y)-\Sigma_{WS}(Y)\Sigma_S(Y)^{-1}\Sigma_{SW}(Y) \defeq \Sigma^\ast(Y),
\end{eqnarray*}
depending only on $Y$.

The independence between $S_Y$ and $W$ implies that, for any integrable $g(W)$,
$$
\mathbb E\left(g(W)\mid S_Y\right)=\mathbb E_Y(g(W)\mid Y,S_Y),
$$
so the covariance matrix of $W\mid S_Y$ is given by
$$
\mathbb E_Y\left(\Sigma^\ast(Y) +\{B(Y) \tilde{S}_Y\}\{B(Y) \tilde{S}_Y\}^T \right).$$
Since by assumption $W \indep S_Y$, this conditional covariance may not depend on $S_Y$;  therefore,  ${\rm tr}\left(\mathbb E_Y\left(\{B(Y) \tilde{S}_Y\}\{B(Y) \tilde{S}_Y\}^T \right)\right)=\mathbb E_Y(\vert \vert B(Y) \tilde{S}_Y\vert\vert^2)$ does not depend on $S_Y$ either, which implies that $B(Y)=0$. Then neither $\mu_W(S_Y,Y)$ nor $\Sigma_W(S_Y,Y)$ depend on $S_Y$, which implies that  $W\mid S_Y,Y = W\mid Y $, so $W\indep S_Y\mid Y$.
\end{proof}

This lemma shows that, for conditionally Gaussian variables, replacing $ W \indep S \mid Y$ by $W \indep \perpT{S}{Y}$ involves no relaxation at all. In addition, for jointly Gaussian variables, $\perpT{S}{Y} = T(S, Y)$ is particularly easy to compute:

\begin{lemma}
\label{T.Gauss}
If the joint distribution of $(S, Y)$ is Gaussian, then the solution $\perpT{S}{Y} = T(S, Y)$ of the optimal transport barycenter problem under the quadratic cost $c(x, y) =
\|y - x\|^2$ agrees with the remainder of the linear regression of $S$ against $Y$, except for the intercept,

$$ \perpT{S}{Y} = T(S, Y) = S-\Sigma_{SY}\Sigma_Y^{-1}(Y-\mathbb E(Y)).$$

\end{lemma}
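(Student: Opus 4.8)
\textbf{Proof plan for Lemma \ref{T.Gauss}.}

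The plan is to verify directly that the proposed map $T(S,Y) = S - \Sigma_{SY}\Sigma_Y^{-1}(Y - \E(Y))$ satisfies the two defining requirements of the OTBP solution \eqref{otbp}: (i) it is the image of $(S,Y)$ under a map of the form $\tilde T(S,Y)$ whose output is independent of $Y$, and (ii) among all such maps it minimizes $\E[c(S,U)] = \tfrac12\,\E\|U - S\|^2$. Since in the Gaussian world both the candidate and all competitors live in a jointly Gaussian vector, independence reduces to uncorrelatedness, and the optimization becomes a finite-dimensional quadratic problem that can be handled by linear algebra.

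First I would check feasibility. Write $U^\ast := T(S,Y)$. By construction $\Cov(U^\ast, Y) = \Sigma_{SY} - \Sigma_{SY}\Sigma_Y^{-1}\Sigma_Y = 0$, and since $(U^\ast, Y)$ is a linear image of the Gaussian vector $(S,Y)$, it is jointly Gaussian; hence $U^\ast \indep Y$. Also $\E[U^\ast] = \E[S]$. So $U^\ast$ is a valid competitor. Next, for optimality, I would restrict attention to competitors of the form $U = S - M(Y - \E(Y)) + v$ for a matrix $M$ and constant $v$ — a reduction one can justify either by invoking the general characterization of the Gaussian barycenter map already cited in the excerpt (the OTBP solution is itself an affine push-forward when the conditionals are Gaussian), or, more self-containedly, by noting that for jointly Gaussian $(S,Y)$ the conditional law $S \mid Y$ has $Y$-independent covariance and mean affine in $Y$, so the cost-minimizing transport of each conditional onto a common target is the affine map aligning means (the covariance part contributing a $Y$-independent constant to the cost). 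For such $U$, the constraint $U \indep Y$ forces $\Cov(U,Y) = \Sigma_{SY} - M\Sigma_Y = 0$, i.e. $M = \Sigma_{SY}\Sigma_Y^{-1}$ is the \emph{unique} feasible choice of $M$; the constant $v$ only shifts the mean and does not affect $\E\|U-S\|^2 = \E\|M(Y-\E Y) - v\|^2$, which is minimized at $v = 0$ (or, if one prefers the barycenter centered at $\E(S)$, left as the intercept, consistent with the ``except for the intercept'' phrasing). This pins down $U = U^\ast$ as the unique minimizer.

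The one point needing slight care — and the main obstacle — is legitimizing the restriction to affine competitors: a priori $\tilde T(S,Y)$ ranges over all measurable maps, not just affine ones. I expect to dispatch this by appealing to the Wasserstein-barycenter characterization invoked right before the statement (each $\mu_y = \mathcal N(\E[S|Y{=}y], \Sigma_{S|Y})$, and for Gaussians with common covariance the $W_2$-barycenter and all optimal transport maps to it are affine, by Knott--Smith / Brenier), so that the infimum over measurable maps is attained within the affine class; the remaining computation is the elementary quadratic minimization above. Alternatively, a direct argument: for any feasible $U$, conditioning on $Y=y$ and using that $U \indep Y$ means the conditional law of $U$ given $Y=y$ is the fixed law $\mu^\ast$, one gets $\E[c(S,U)] = \tfrac12\E[\,\E[\|U-S\|^2 \mid Y]\,] \ge \tfrac12\E[W_2^2(\mu_Y,\mu^\ast)] \ge \tfrac12\E[W_2^2(\mu_Y,\bar\mu)]$ where $\bar\mu$ is the barycenter, with equality exactly for the affine coupling; this both identifies the optimal target and shows $U^\ast$ achieves it. Either route closes the proof.
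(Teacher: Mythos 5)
Your argument is correct, but it takes a genuinely different route from the paper. The paper does not work with Brenier/Knott--Smith maps or Gaussian $W_2$ formulas at all: it first proves the more general Lemma~\ref{T.C.mean}, valid for any location-type model $S = \E[S\mid Y] + \varepsilon$ with $\varepsilon \indep Y$, by a purely variational argument over the Monge class itself. Any feasible competitor is written as $\tilde T = \perpT{S}{Y} + \hat T$, feasibility ($\tilde T \indep Y$) forces $\E[\hat T \mid Y] = \E[\hat T]$, the cross term in the expected cost then vanishes by the tower property, and the cost decomposes as $\E\|\E[S\mid Y]-\E[S]\|^2 + \E\|\hat T\|^2$, minimized at $\hat T = 0$; Gaussianity enters only at the last step, to identify $\E[S\mid Y]$ as the affine regression $\E[S]+\Sigma_{SY}\Sigma_Y^{-1}(Y-\E Y)$. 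This sidesteps entirely the issue you correctly flag as the main obstacle --- justifying the restriction to affine competitors --- because the orthogonality argument applies to \emph{all} measurable feasible maps at once. Your second, ``direct'' route (conditioning on $Y=y$, bounding the conditional cost below by $W_2^2(\mu_y,\nu)$ and then by the barycenter value, with the mean-separation bound $W_2^2(\mu_y,\nu)\ge\|m_y-m_\nu\|^2$ and the candidate attaining equality) is essentially a coupling-level version of the same calculation and does close the gap, with the added benefit of showing optimality over all couplings, not just Monge maps; your first route, via Knott--Smith/Brenier and the Gaussian barycenter characterization, also works but imports heavier external results. What the paper's approach buys is self-containedness and extra generality (it covers any conditional law that depends on $Y$ only through its mean, not just Gaussians); what yours buys is the explicit link to the Wasserstein-barycenter picture already sketched before the lemma. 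One cosmetic point: your remark that the constant $v$ ``does not affect'' the cost is at odds with your own display $\E\|M(Y-\E Y)-v\|^2 = \E\|M(Y-\E Y)\|^2+\|v\|^2$; the conclusion $v=0$ is right, but the phrasing should be fixed.
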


This lemma is a particular case of a more general statement:

\begin{lemma}
\label{T.C.mean}
If the conditional distribution $S|Y$ is Gaussian,
$$ S \mid Y \sim \mathcal N(\mu_S(Y), \Sigma_S(Y)),
$$
then the solution to the corresponding optimal transport barycenter problem with $Y \sim p(Y)$ under the quadratic cost  $c = \|y - x\|^2$ is given by
$$ S_Y = T(S, Y) = \left(\Sigma_{S_Y}\right)^{\frac{1}{2}} \left(\Sigma_Y\right)^{-\frac{1}{2}}\left(S - \mu_Y\right) + \E[S], $$
where $\Sigma_{S_Y} $ is the solution of the fixed point equation
$$ \Sigma_{S_Y} = \mathbb E_{Y}\left[ \left(\Sigma_{S_Y}^{1/2} \Sigma_S(Y) \Sigma_{S_Y}^{1/2} \right)^{1/2}\right] . $$
\end{lemma}

This lemma is proved for discrete variables $Y$ in \cite{alvarez2016fixed}, and extended as Theorem 5 in \cite{YangTabak2020} to general measurable $Y$.

Lemma \ref{T.Gauss} follows, since when $(S, Y)$ are jointly Gaussian, $\Sigma_S(Y)$ does not depend on $Y$, so $\Sigma_S(Y)= \Sigma_{S_Y}$, and $\mu_S(Y) = \mathbb E[S] + \Sigma_{SY}\Sigma_Y^{-1}(Y-\mathbb E(Y))$.

Not only the barycenter agrees with the remainder of linear regression, but the expected value of the squared norm of this remainder, i.e. the regression's mean squared error, equals a constant minus the optimal transportation cost, so minimizing the regression error is equivalent to maximizing the transportation cost. This is the content of the following lemma.

\begin{lemma}
\label{R_cuad}
Let $\Y \in \mathbb R^{d_Y}$ and $\W \in \mathbb R^{d}$ be random vectors, with $\mathbb E[Y] = 0$ and $\mathbb E[W] = 0$, and let $\mathbb D(Y, W)$ be the total transportation cost of the OTBP of $Y | W$. Then,
\begin{equation*}
 \min_{\beta} \mathbb E\left[\left\|Y -\beta W \right\|_2^2\right] = {\rm tr} (\Sigma_Y) - \mathbb D(Y, W), \quad \mbox{which is achieved at  }\beta=\Sigma_{YW}\Sigma_W^{-1}.
\end{equation*}
\end{lemma}

\begin{proof}
Recall that, for any vector $a\in \mathbb R^p$, $\vert\vert a\vert\vert^2 =\text{tr}(a a^\top)$.  Then
\begin{eqnarray*}
\mathbb E\left[\left\| Y -\beta W \right\|^2\right] &=& \mathbb E\left[ \text{tr} \left\{\left(Y -\beta W\right) \left(Y^\top - W^\top \beta^\top\right)  \right\} \right]\\
&=& \text{tr} \left\{ \Sigma_Y - \beta \Sigma_{WY} - \Sigma_{YW} \beta^\top + \beta \Sigma_W \beta^\top \right\} \\
&=& \text{tr} \left\{\Sigma_Y - \Sigma_{YW}\Sigma_W^{-1} \Sigma_{WY} \right\}
\end{eqnarray*}

On the other hand, since $T(Y, W) = Y - \Sigma_{YW}\Sigma_W^{-1} W$,
\begin{eqnarray*}
 \mathbb D(Y, W) &=& \E \left[\|T(Y, W) - Y\|^2 \right] = \E \left[\left\|\Sigma_{YW}\Sigma_W^{-1} W\right\|^2 \right] \\
 &=& \E \left[\text{tr}\left( \Sigma_{YW}\Sigma_W^{-1} W W^\top \Sigma_W^{-1} \Sigma_{YW}^\top \right)\right] \\
 &=& \text{tr}\left( \Sigma_{YW}\Sigma_W^{-1} \Sigma_{WY} \right),
\end{eqnarray*}
completing the proof.
\end{proof}

Since for Gaussian variables, independence is equivalent to uncorrelation, we conclude from this subsection that, when $(W, Z, S)\mid Y$ is Gaussian,
$$ W \mid (Y, Z) = W \mid Y \ \Rightarrow \ \perpT{S}{Y} \perp W , $$
where $\perpT{S}{Y} = T(S, Y)$ has a simple closed-form expression, and
the reciprocal implication also holds whenever $\Sigma_{ZS}$ has full rank. In the latter case, $\perpT{S}{Y} \perp W$ is fully equivalent to the un-confoundedness of $W$, not just a practical relaxation.

\subsection{Closed-form solution to our problem}

We prove a lemma that solves our maximization problem in closed form in terms of the eigenvectors of a matrix.

\begin{lemma} \label{theorem:closed form}
\label{solution}
A solution $A^\ast$ to the problem
$$ \max_{A^\top A = I_{d}} L = \text{tr} \left(\ A^\top H A \right) $$
where $H$ is a symmetric matrix, has as columns the first $d$ normalized eigenvectors of $H$.
\end{lemma}

Note that a more general solution has $A = A^\ast Q$, where $Q$ is an arbitrary orthogonal square matrix. Since $H$ is symmetric but not necessarily positive definite, we define its ``first'' eigenvectors as those with corresponding largest eigenvalues with their sign considered, e.g. $\mu_1 = 2 > \mu_2 = -3$. In order to find them through the power method, it is enough to add to $H$ a multiple of the identity large enough that $H$ becomes non-negative definite.

\begin{proof}
\begin{eqnarray*}
 L &=& \text{tr}\left\{A^\top H A \right\}  \\
 &=& \text{tr}\left[A^\top \left(\sum_j \mu_j h_j h_j^\top \right) A \right],
\end{eqnarray*}
where the $\{\mu_j\}$ are the eigenvalues of $H$ sorted in decreasing order, and the $\{h_j\}$ are the corresponding orthonormal eigenvectors. Then
$$ L = \text{tr} \left(\sum_j \mu_j a_j a_j^\top \right) = \sum_j \mu_j \left\|a_j\right\|^2 , $$
where $a_j = A^\top h_j$ is the projection of $h_j$ on the subspace spanned by the columns of $A$. It follows that $L$ is maximized when this subspace agrees with the one spanned by $\{h_1, \ldots , h_{d}\}$ (so $\|a_j\| = 1$ and $L = \sum_{j=1}^d \mu_j$), proving the claim.
\end{proof}

\section{Experimental validation}
\label{sec:experiments}

This section provides experimental validation of the proposed
barycentric feature extractor.

\subsection{Population experiment for the toy model: covariance shifts and method comparison}
\label{sec:population}

We begin with a population-level experiment designed to compare our proposal, the barycentric
method, with Anchor regression under controlled Gaussian covariance shifts.
Working entirely at the population level allows us to isolate structural effects
from sampling variability and to evaluate both methods on equal footing.

We consider the structural equation model presented in Subsection \ref{surrogate}:
$$
(Z,S,Y) \leftarrow \mathcal{N}(0,\Sigma_\rho),
\qquad
X_1  \leftarrow Z + \mathcal N(0,1),
\qquad
X_2  \leftarrow Y - Z + \mathcal N(0,1).
$$
The Gaussian terms are all independent.
Source and target environments are generated by selecting $\Sigma_\rho$, the covariance matrix for $(\Z, \S,\Y)$, with  $\Var(\Z)=\Var(\S)=\Var(\Y)=1$, $\Cov(\Z, \S)=\rho_{\z,\s}$, $\Cov(\Z, \Y)=\rho_{\z,\y}$, $\Cov(\S, \Y)=\rho_{\s,\y}$,  and $\rho=(\rho_{\z,\s},\rho_{\z,\y},\rho_{\s,\y} )$,  
The parameters $\rho$ are chosen independently and uniformly from a finite grid on the interval $[-1,1]$, discarding those combinations that do not yield positive definite covariance matrices.

For each source--target pair, the penalization parameter in both methods is tuned to minimize the target mean
squared error (MSE).  In this example we  do not use  the target distribution of $Y$ for constructing the linear predictor, i.e., we consider $\gamma(Y)=1$ in Equation \eqref{beta_with_pt}.
If the optimal tuning parameter $\lambda$ is zero, the corresponding method reduces to
ordinary least squares (OLS) {computed at source}.
Otherwise, we report the method achieving the lowest target error.

\begin{table}[ht!]
\centering
\label{tab:counts}
\begin{tabular}{lccc}
\toprule
Method & Anchor & Barycentric & OLS \\
\midrule
Count & 457 & 1020 & 900 \\
Percentage & 19.2\% & 42.9\% & 37.9\% \\
\bottomrule
\end{tabular}
\caption{Number of wins per method (target MSE).}
\end{table}

The barycentric reduction followed by OLS achieves the lowest target MSE in a
substantial fraction of cases, outperforming both Anchor regression (considering $S$ as the anchor) and standard
OLS in the majority of environments involving non-negligible covariance shifts. The best test MSE of Anchor and the barycentric method are compared in Figure \ref{fig:scatterplot-best}.

\begin{figure}[ht!]
    \centering
    \includegraphics[width=0.9\linewidth]{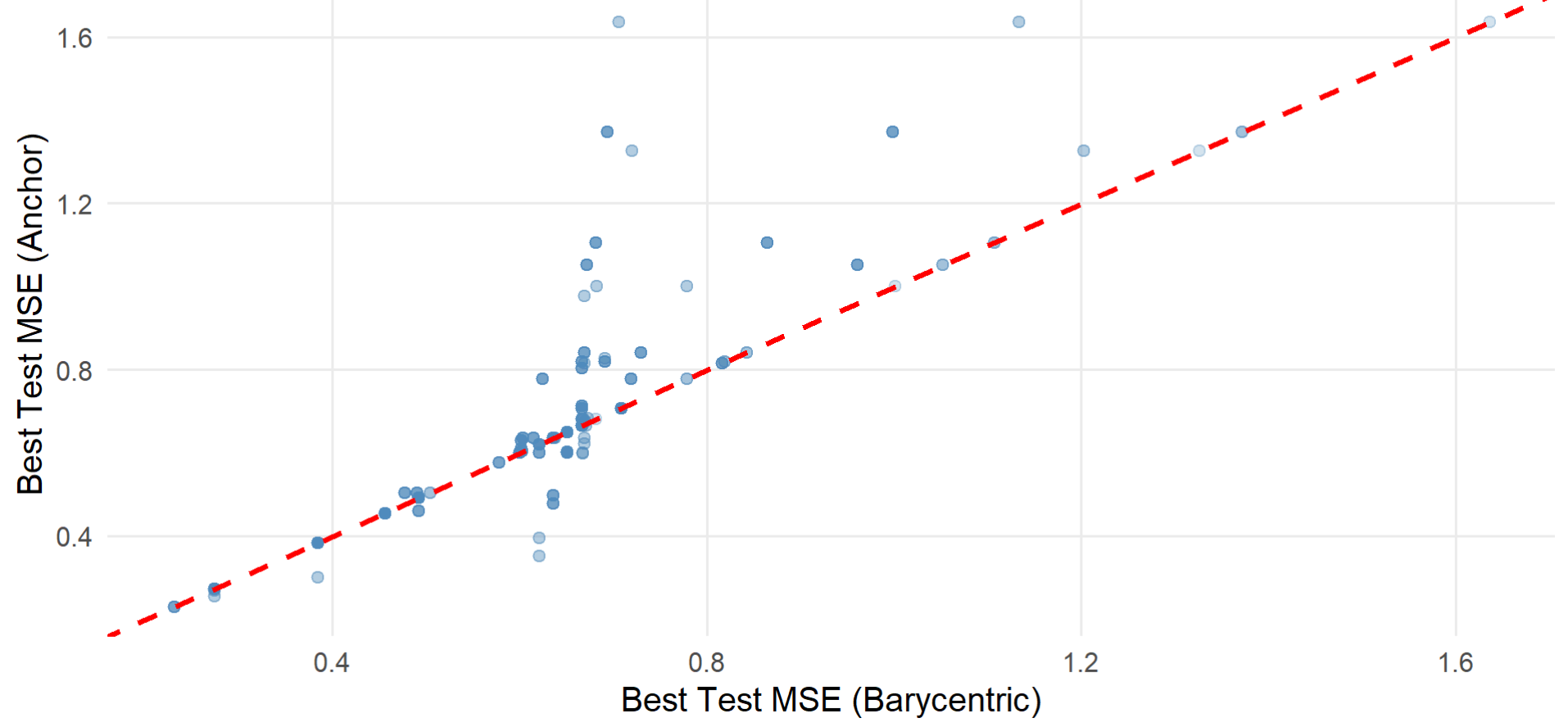}
    \caption{Scatterplot comparing the best target MSE achieved by the
    barycentric method (horizontal axis) and Anchor regression (vertical axis)
    across all valid source--target covariance pairs.
    The dashed diagonal marks the line
    $\mathrm{MSE}_{\text{bar}} = \mathrm{MSE}_{\text{anc}}$.
    Most points lie above this line, indicating that the barycentric reduction
    yields lower target error more frequently.}
    \label{fig:scatterplot-best}
\end{figure}

Figure \ref{fig:frobenius} shows the boxplot of the Frobenius distance between the source and target
covariance matrices, revealing an intuitive structure.
When source and target are close (small Frobenius distance), OLS typically wins,
as expected in nearly i.i.d.\ settings: the additional information in the variables $Z$ overcomes their confounding effect when their joint distribution with $Y$ is similar in source and target.
When environments differ substantially (large Frobenius distance), the
barycentric method tends to dominate, reflecting its advantage in settings with
strong distributional shift.
\begin{figure}[ht!]
\centering\includegraphics[width=0.6\linewidth]{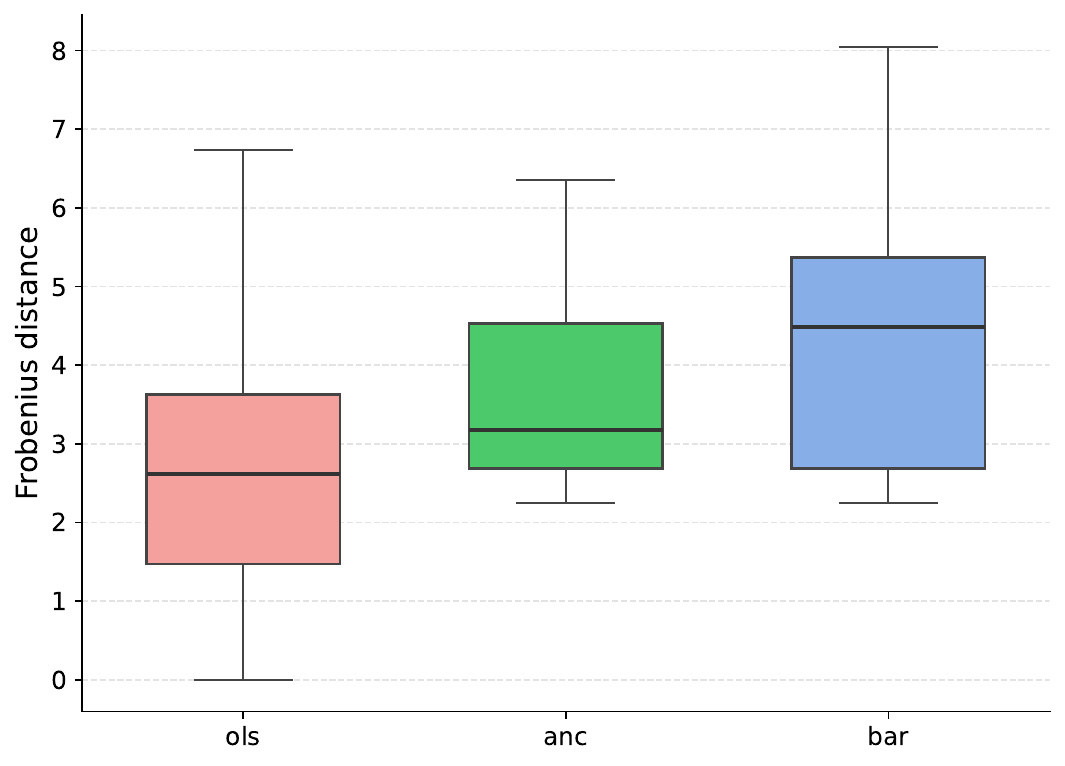}
\caption{Frobenius distance between source and target covariance matrices, grouped by the method achieving the lowest target MSE.
When source and target distributions are very similar (small Frobenius distance), OLS tends to win. As the shift grows, however, the barycentric method---which explicitly penalizes dependence on the
barycentric residual---becomes increasingly advantageous, while Anchor occupies
a small intermediate regime.}
    \label{fig:frobenius}
\end{figure}

\subsection{Interactive Shiny application: the continuous case}

To complement this population-level analysis, we developed an interactive Shiny
application that implements the same structural equation model in a finite-sample
setting, illustrated in Figure~\ref{fig:continuousshiny}. The data-generating
process follows the toy model of the previous subsection but we allow the variances of the error terms to change, i.e.,

$$
(Z, S, Y) \leftarrow \mathcal{N}(0, \Sigma_\rho), \quad X_1 \leftarrow Z + \mathcal{N}(0,\sigma_1^2), \quad X_2 \leftarrow Y - Z + \mathcal{N}(0, \sigma_2^2).
$$

Independent sliders control the source and target correlation structures,
$\sigma_1^2$, $\sigma_2^2$, $\lambda$, the target mean $\mu^t_Y$, the target
variance $\sigma^t_Y$, and a toggle for whether $p_t(Y)$ is known to the
algorithm. The application displays the source and target covariance matrices,
scatter plots of the observed covariates, scatter plots of $Y$ against
$\widehat{Y}(W_{\lambda=1})$ and $\widehat{Y}(W_\lambda)$ in both environments,
and the corresponding relative mean squared errors $\mathrm{MSE}/\mathrm{Var}(Y)$,
allowing the user to track the trade-off between in-sample fit and
out-of-distribution generalization as $\lambda$ varies. The application is
freely accessible at \url{https://2mb35i-ian-bounos.shinyapps.io/OT-invariance/}.
For the example displayed, we have chosen a highly confounding set of parameters, where
a prediction based on $X$ yields predictions nearly opposite to the ground truth. Yet, as the figure shows, 
a prediction based on $W$ with $\lambda = 1$ yields excellent predictions.

\begin{figure}[ht!]
    \centering
 \includegraphics[trim=0 130pt 0 230pt, clip, width=.9\textwidth]{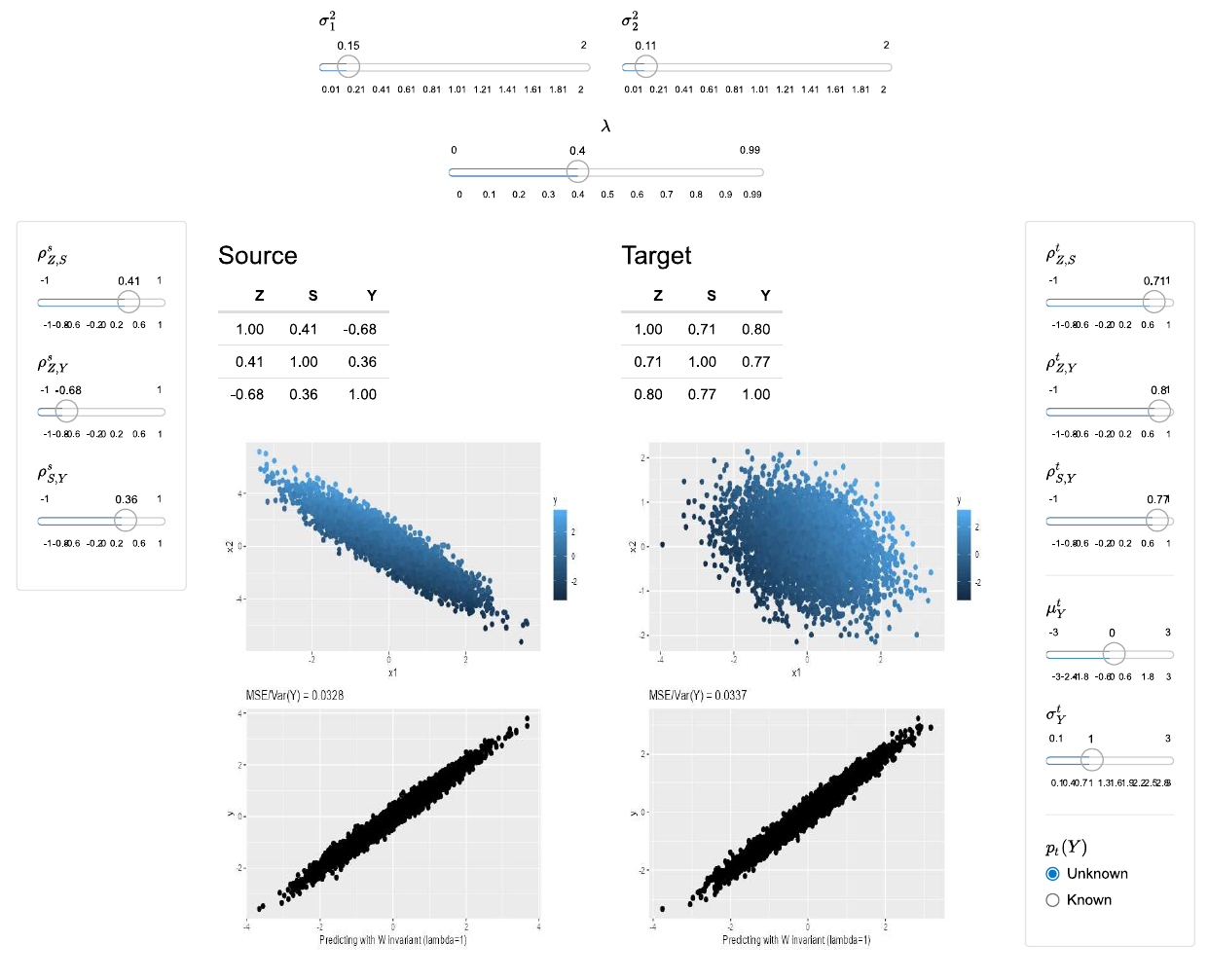}
    \caption{Screenshot of the interactive Shiny application for the continuous-response
case (\url{https://2mb35i-ian-bounos.shinyapps.io/OT-invariance/}). Sliders on the left and right panels control the source and
target environment parameters.}
    \label{fig:continuousshiny}
\end{figure}

\subsection{Multi-dimensional case, continuous response, finite sample}

\paragraph{Experiment A.}
To analyze the case in which the dimensions of $Y$, $S$, and $Z$ are larger than one,
we investigate the structural effect of the regularization parameter $\lambda$
on the invariance properties of the learned representation.
Unlike the previous population experiment, this analysis is conducted in a
finite-sample setting.

Throughout this experiment, we fix the dimensions to
$d_S = d_Z = d_Y = 2$, while the observed feature dimension is set to $d_X = 6$.
For each Monte Carlo repetition, we first generate a random positive definite
covariance matrix $\Sigma^s$ for the joint Gaussian vector $(S,Z,Y)$.
A congruence transformation is then applied to enforce $\operatorname{Cov}(Y)=I$,
thereby ensuring that the distribution of $Y$ is identical across environments,
while preserving arbitrary cross-dependencies with $S$ and $Z$.
An independent covariance matrix $\Sigma^t$ is generated in the same way,
inducing a distribution shift.

Given each covariance, finite samples are drawn for $(S, Z, Y)$  and then $X$ is generated according to the structural model
\[
X \leftarrow A Z + B Y + \mathcal N(0,{\rm I}_{d_X}),
\]
where the matrices $A$ and $B$ are fixed across environments.

For each realization, the proposed method is applied over a grid of $\lambda$ values and we evaluate the conditional correlation between the learned feature representation
and the transformed nuisance variables $T(Z,Y)$.
Figure~\ref{fig:multirep_lambda} reports the results across multiple independent realizations,
showing that the conditional correlation systematically decreases as $\lambda$ increases,
thereby illustrating the effect of the regularization parameter on conditional invariance
in the multivariate, finite-sample regime. The sample size is $m_s=m_t=5000$.

\begin{figure}
    \centering
    \includegraphics[width=0.8\linewidth]{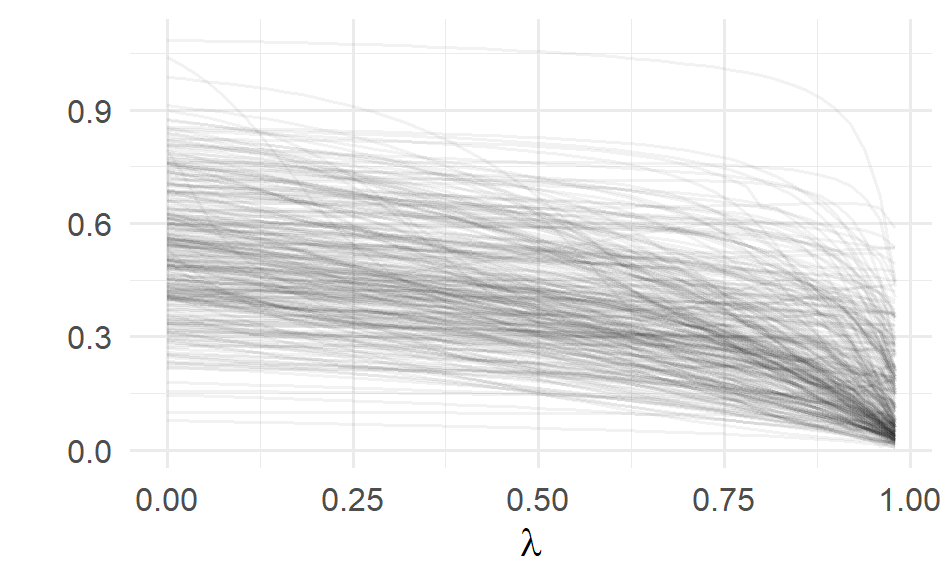}
    \caption{ Conditional correlation curves
    $\lambda \mapsto \|\,\text{Corr}(W_\lambda,Z \mid Y)\|_F$
    over many independently generated environments.
    Despite random variation, all curves exhibit the same structural behaviour:
    a relatively flat region for small $\lambda$ followed by a systematic decay
    and eventual approach to invariance as $\lambda\to 1$.}
    \label{fig:multirep_lambda}
\end{figure}

\paragraph{Experiment B.} In this experiment, we use $ e \in \{s, t\}$, where $s$  and $t$ represent the source and target, respectively.
In order to illustrate the multidimensional Gaussian case, we generate random data in the following way:

\begin{enumerate}

\item We select the problem's dimensions $d_Y=3$, $d_X=20$, $d_Z=4$ and $d_S=5$, the number of source observations $m_s$ and target values $m_t$,  the covariance matrix $\Sigma_Y^{e}$ and expected value $\mu^{e}_Y$, for $e\in \{s,t\}$,  of the marginal distribution of $Y$ in source and target. We also select  matrices $B_Y \in \R^{d_X \times d_Y}$ and $B_Z \in \R^{d_X \times d_Z}$ determining how $X$ depends on $Y$ and $Z$, and three pairs of real parameters $c^{e}_{yz}$, $c^{e}_{ys}$ and $c^{e}_{zs}$, all between zero and one, roughly representing the squared of the pairwise correlations in source and target between $Y$, $Z$ and $S$.

\item We generate $m_s$ and $m_t$ samples from five standard normal vectors: $r^{e}_Y \in \R^{d_Y}$, $r^{e}_Z \in \R^{d_Z}$, $r^{e}_S \in \R^{d_S}$, $r^{e}_X \in \R^{d_X}$ and $r^{e}_{ZS} \in \R^{\min(d_Z, d_S)}$. These are the seeds from which we will build $Y$, $Z$ and $S$.

\item We generate four pairs of random orthogonal matrices $Q^{e}_{zy} \in \R^{d_Z \times d_{y}}$, $Q^{e}_{sy} \in \R^{d_S \times d_{y}}$, $Q^{e}_{zs} \in \R^{d_Z \times d_{zs}}$ and $Q^{e}_{sz} \in \R^{d_S \times \min(d_Z, d_S)}$, where we define a rectangular matrix $Q \in \R^{m, n}$ to be orthogonal when $Q^\top Q = {\rm I}_n$ if $n \le m$ and $Q Q^\top = {\rm I}_m$ otherwise. Introducing a parameter $\alpha$ with $0 \le \alpha \le 1$, we redefine the four target matrices $\{Q^t\}$ into the (no-longer orthogonal) $Q^t \rightarrow \alpha Q^s + (1 - \alpha) Q^t$. This way, the $\{Q^s\}$ and $\{Q^t\}$ range from independently drawn to identical matrices, which, together with the choice of the coefficients $\{c^{e}\}$, allows us to experiment with the degree of difference between the source and target distributions.

\item We set
$$ Y^{e} = \left(\Sigma_Y^{e}\right)^{\frac{1}{2}} r_Y^{e} + \mu_{Y}^{e}, $$
$$ Z^{e} = \left(\Sigma_{\tilde{Z}^{e}} \right)^{-\frac{1}{2}} \left(\tilde{Z}^{e} - \E[\tilde{Z}^{e}]\right), \quad \tilde{Z}^{e} = r^{e}_Z + a^{e}_{zy}\ Q^{e}_{zy}\ r^{e}_Y + a^{e}_{zs}\ Q^{e}_{zs}\ r^{e}_{ZS},  $$
$$ S^{s} = r^{s}_S + a^{s}_{sy}\ Q^{s}_{sy}\ r^{s}_Y + a^{s}_{sz}\ Q^{s}_{sz}\ r^{s}_{ZS},  $$
where $a$ is of the form $\sqrt{\frac{c}{1 - c}}$ for the different values of $c$. We do not need to determine target values for $S$, since these will never be used.

\item Finally, we set the features
$$ X^{e} = r^{e}_X + B_Y Y^{e} + B_Z Z^{e} .$$
\end{enumerate}

We picked $d=3$, seeking as many invariant features $W$ as there are labels $Y$. The matrices $B_Y$ and $B_Z$ were picked at random, with independent entries distributed uniformly in $[-\frac{1}{2}, \frac{1}{2}]$ and then multiplied by $1.5$ and $4.5$ respectively, in order to make the confounding factor $Z$ more relevant to $X$ than the label $Y$. We set the squared correlation-like values $c$ to

$$ c^s_{yz} = 0.8, \quad c^s_{ys} = 0.1, \quad c^s_{zs} = 0.7, \quad
c^t_{yz} = 0.6, \quad c^t_{ys} = 0.3 \quad \hbox{and}\quad  c^t_{zs} = 0.4. $$

The parameters for the source marginal distribution of $Y$ were picked at random, with covariance matrix $\Sigma^s_Y = F^s {F^s}^\top$, with the entries of both the mean vector $\mu_{Y}^s$ and the matrix $F^s$ uniformly distributed in $[-\frac{1}{2}, \frac{1}{2}]$.

We display the results of four typical experiments. In the first two, the marginal of $Y$ in target was made equal to the one in source, so no weights $\gamma(Y)$ are required. For the third and fourth experiment, we adopted instead a different marginal for the target, with
$$ \mu_{Y}^t = \frac{1}{2} \left(\mu_{Y}^s + R \right), \quad \Sigma_Y^t = F^t {F^t}^\top, \quad F^t = \frac{1}{2} \left(F^s + G \right), $$
where the entries of the vector $R$ and the matrix $F^t$ are uniformly distributed in $[-\frac{1}{2}, \frac{1}{2}]$. In the first and third experiments, the parameter $\alpha$ was set to zero, making the joint distributions for $(Y, Z, S)$ independent draws in sample and target, while the second and fourth experiment have $\alpha = \frac{1}{2}$, ensuring certain degree of similarity between the sample and target underlying distributions.

Figure \ref{fig:alpha_0_beta_1} displays the results of the first experiment, plotting the relative mean squared error (MSE) as a function of the penalization parameter $\lambda$, with ordinary least-squares (OLS) for $Y(X)$ used as a reference. At $\lambda = 0$, the results for OLS's $Y(X)$ and our method's $Y(W)$ agree in both source and target: since $d=d_Y=3$, the best choice for $W$ agrees with the subspace of three linear combinations of $X$ that yield $Y$ through OLS. The relative error in target is much bigger than in source, about three times as large, the reason being that OLS made significant use of the relation between $X$ and $Y$ due to the confounding effect of $Z$, which differs significantly in target. As $\lambda$ increases, the accuracy in source deteriorates, since removing the effect of $Z$ takes away a significant source of correlation between $X$ and $Y$. On the other hand, the prediction in target improves, with the two MSE converging to essentially the same number as $\lambda$ approaches one: with the effects of $Z$ removed, the relation between $W$ and $Y$ is identical in source and target. That using the surrogate $S$ worked so perfectly to remove the confounding effects of $Z$, is due to the fact that $d_S \ge d_Z$, so $\Sigma_{ZS}$ has full rank.

The results of the second experiment, displayed in Figure \ref{fig:alpha_0.5_beta_1}, are similar, with a significant difference: since now there is some commonality in the way that $Z$ relates to $Y$ in source and sample, removing the effects of $Z$ completely is not optimal, and the best prediction in target is achieved for $\lambda = 0.84$. For the same reason, even the difference in OLS accuracy between source and target, though still large, is not as extreme as in the first experiment.

In the third experiment (Figure \ref{fig:alpha_0_beta_0.5}), the marginals for $Y$ are not the same in source and target, so using regression results from the former brings some systematic error to the predictions of the latter, though the results for $\lambda \approx 0.9$ are still quite good. Bringing knowledge of the marginal in target through the weights $\gamma(Y)$ in equation \eqref{beta_with_pt}, on the other hand, yields much better answers, and the accuracy in source and target nearly agree for $\lambda = 1$.

This is still the case for the fourth experiment (Figure \ref{fig:alpha_0.5_beta_0.5}). However, because of the common traits in the correlation between $Y$ and $Z$ in source and target, the optimal prediction is achieved at $\lambda = 0.88$ (using the weights.)  Notice that, in the third and fourth experiment, the results of the weighted version of the algorithm do not agree at $\lambda = 0$ with the also weighted OLS. The reason is that, in the presence of weights, OLS will not choose the same linear combination of the $X$ that gave rise to $W$. As the contrasting results of the two experiments show, using the weights for $X$ for small values of $\lambda$ is not necessarily beneficial: since, unlike $W | Y$ when $\lambda =1$, the conditional distribution for $X | Y$ is not invariant across environments, weighting the source samples $\{x_i, y_i\}$ by $\{\gamma_i\}$ does not really produce synthetic samples from the unknown $p_s(X, Y)$.

\begin{figure}[ht!]
    \centering
    \includegraphics[width=0.5\linewidth]{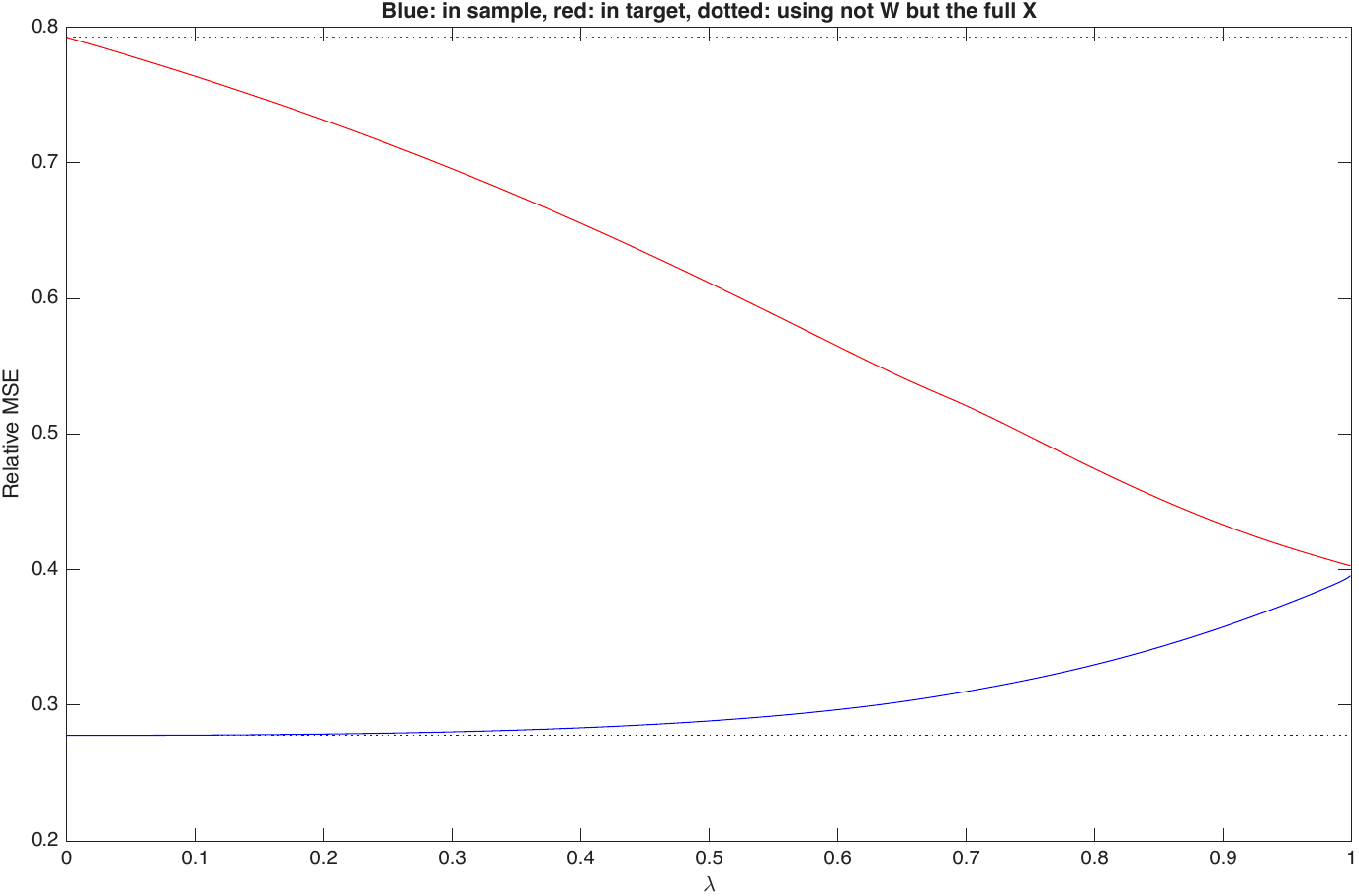}
    \caption{Same marginal distributions in source and target, independently drawn conditional distributions for $(Z, S)$.}
    \label{fig:alpha_0_beta_1}
\end{figure}

\begin{figure}[ht!]
    \centering
    \includegraphics[width=0.5\linewidth]{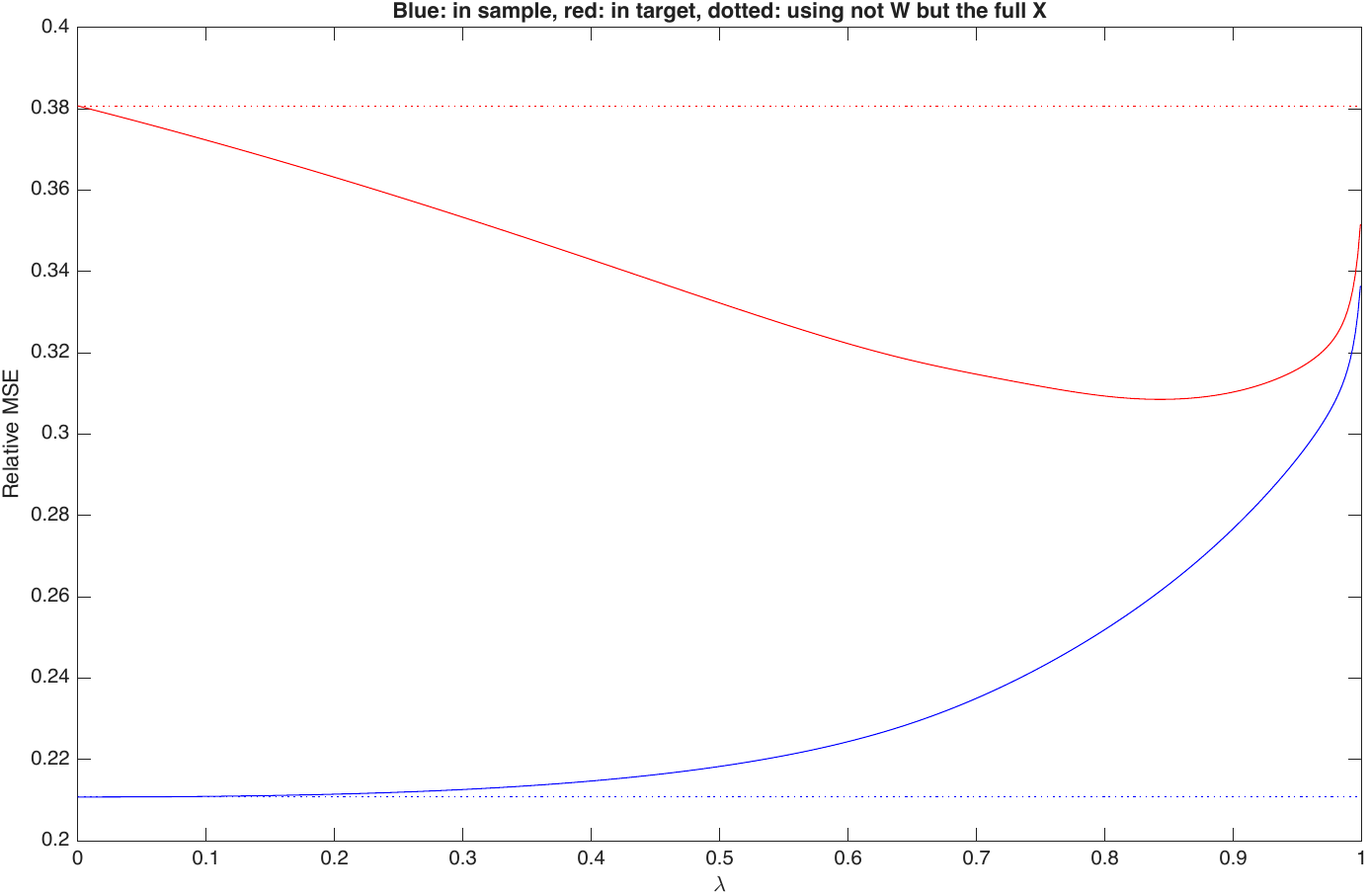}
    \caption{Same marginal distributions in source and target, related conditional distributions for $(Z, S)$.}
  \label{fig:alpha_0.5_beta_1}
\end{figure}

\begin{figure}[H]
    \centering
    \includegraphics[width=0.7\linewidth]{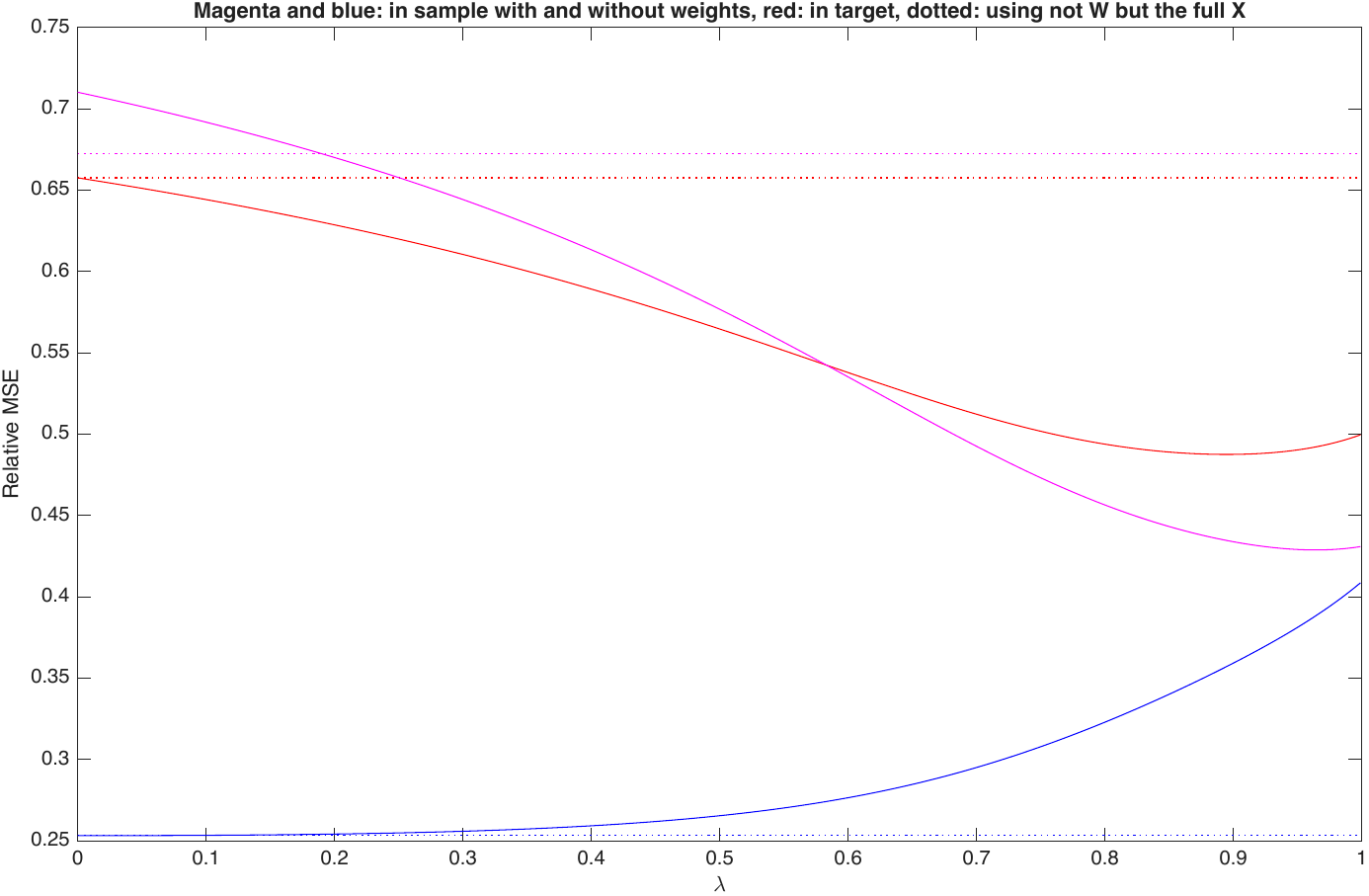}
    \caption{Different marginal distributions in source and target, independently drawn conditional distributions for $(Z, S)$.}
\label{fig:alpha_0_beta_0.5}
\end{figure}

\begin{figure}[H]
    \centering
    \includegraphics[width=0.7\linewidth]{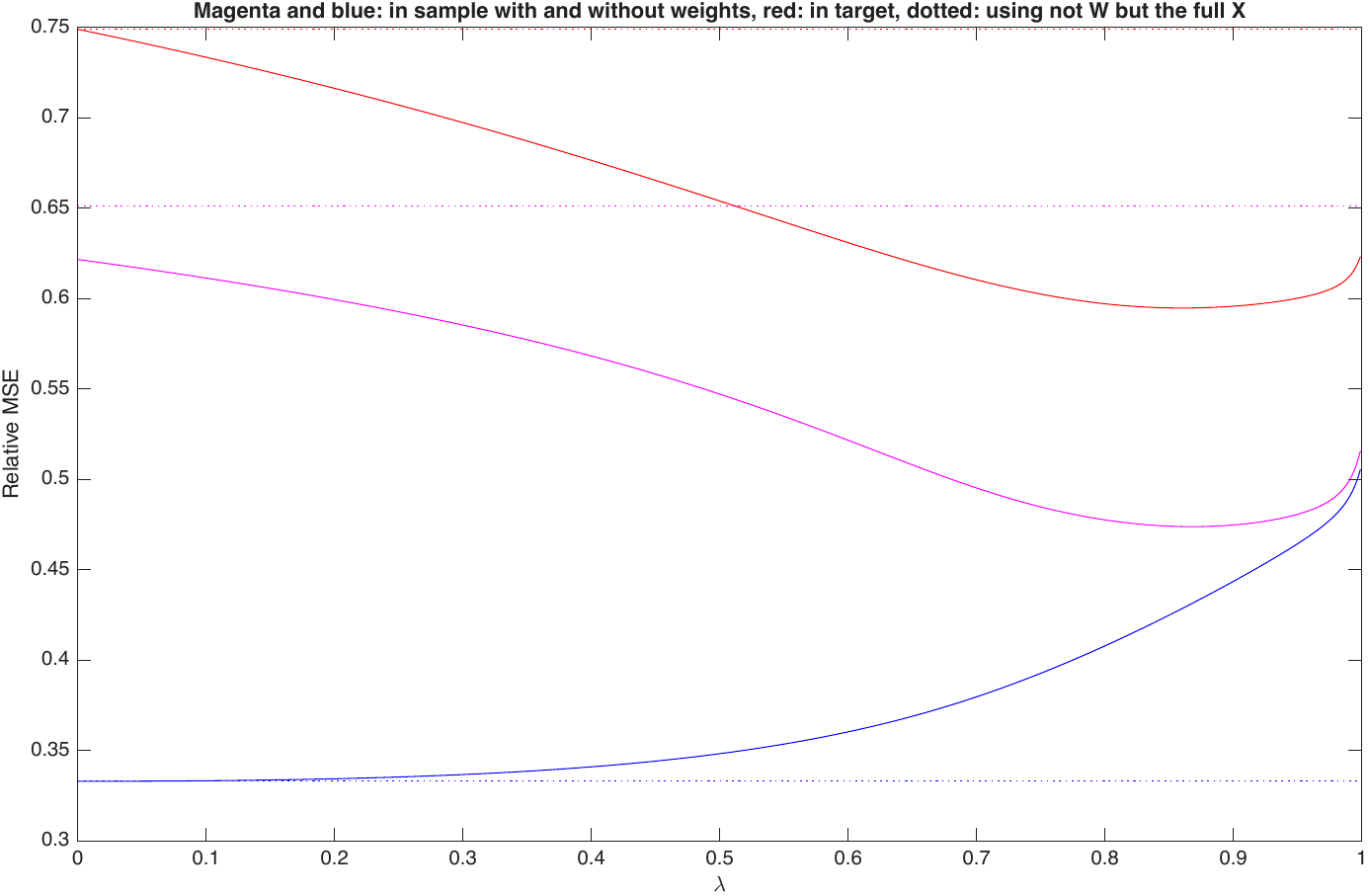}
    \caption{Different marginal distributions in source and target, related conditional distributions for $(Z, S)$.}
\label{fig:alpha_0.5_beta_0.5}
\end{figure}

\subsection{Interactive Shiny application: categorical case}

The categorical-label methodology of Subsections~\ref{categorical}
and~\ref{pty_new} is also implemented in the Shiny application at
\url{https://2mb35i-ian-bounos.shinyapps.io/OT-invariance/}.
As illustrated in Figure~\ref{fig:discreteshiny}, sliders on the left and right
panels control the full set of source and target parameters. All outputs update
in real time and include scatter plots of $(X_1, X_2)$ and of the learned feature
$W_\lambda$ colored by class label, kernel density estimates of
$W_\lambda \mid Y = 0$ and $W_\lambda \mid Y = 1$, a summary table of LDA
target accuracy for $X$, $W_{\lambda=1}$ and $W_\lambda$, and the accuracy
curve $\lambda \mapsto \mathrm{Accuracy}_{\mathrm{target}}(W_\lambda)$.
An optional toggle activates the known-target-marginal regime of
Subsection~\ref{pty_new}, incorporating the importance weights
$\gamma(Y) = p_t(Y)/p_s(Y)$ into both the feature extraction and the LDA
classifier. Again, we have chosen to display an example where naive predictions
based on $X$ (i.e. with $\lambda = 0$, and even up to $\lambda = 0.5$) would systematically yield the wrong binary answer.
Yet as $\lambda$ approaches $1$, $W$ becomes as good a predictor for $Y$ is target as in sample, yielding a high rate of success.

\begin{figure}[H]
    \centering
 \includegraphics[trim=0 200pt 0 228pt, clip, width=1\textwidth]{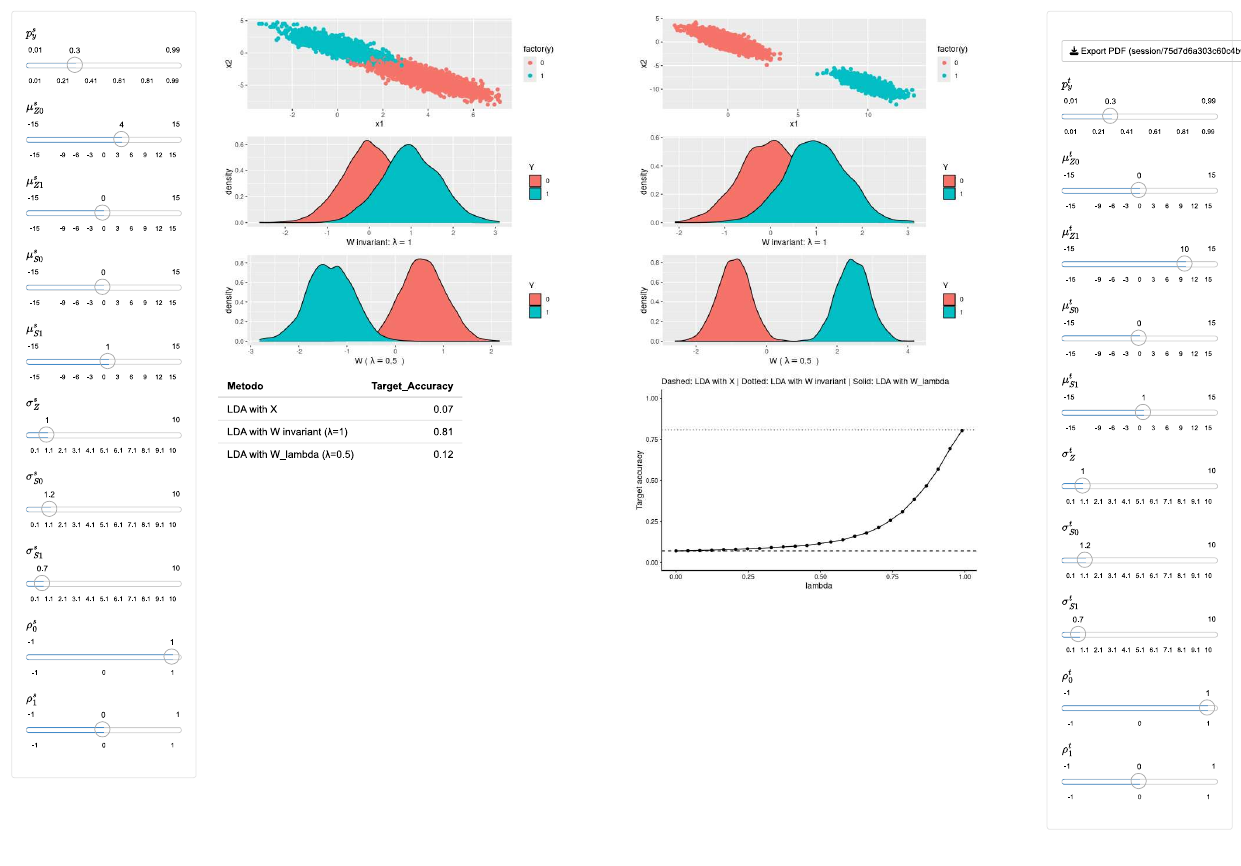}
    \caption{Screenshot of the interactive Shiny application for the categorical-response
case (\url{https://2mb35i-ian-bounos.shinyapps.io/OT-invariance/}). Sliders on the left and right panels control the source and
target environment parameters.}
    \label{fig:discreteshiny}
\end{figure}

\section{Avenues for further development}
\label{sec:avenues}

The methodology developed in this article can be extended to more general scenarios, going well beyond the transfer learning between two environments of linear regression and classification under Gaussian assumptions. We sketch here briefly some of these extensions, which are the subject of current work.

\subsection{Categorical contextual variables}

In many applications, some or all available contextual variables $S$ are categorical, with values in $\{s_1, \ldots, s_k\}$. In order to use categorical contextual variables within this article's procedure, we need to assign meaning to the barycenter problem for $\perpT{S}{Y} = T(S, Y)$, whose regular formulation requires $S$ to live in a smooth manifold. A natural way to do this is to replace the $\{\s_j\}$ by the corresponding probabilities $\{p_j = p(s_j)\}$, and
embed them either in $\R^k$ or in the $(k-1)$-dimensional simplex.

\subsection{Multiple source environments}

This article considered a single source environment $s$, potentially different from the target environment $t$ to which we would like to transfer the predicting procedure for $Y$. Often though, one has access to various source environments $\{e_k\}$, such as different hospitals, regions, times and populations. The resulting additional richness in the data can be exploited to better transfer learning to the target environment. Differences with the case with a single source include the following.

\begin{enumerate}
    \item The very existence of various source environments may do part of the de-confounding work for us, since OLS itself --or any other tool for estimating $Y$ from $X$-- will tend to disregard those features that do not correlate with $Y$ consistently across environments.

    \item The contextual variables $S$ available need not be the same for all environments, as the measurements available in each may differ.

    \item We may relax the assumption that $\p(\X\mid\Z, \Y)$ must be the same for source and target, since having various environments at our disposal allows us to treat the environment $e$ itself as an additional confounding variable,
    i.e. write $\p_e(\X\mid\Z, \Y) = \p(\X\mid\Z, e, \Y)$.

    \item One can use the multiple source environments to perform cross-validation, so as to get  hold on the procedure's only tunable parameter $\lambda$.
\end{enumerate}

\subsection{Extension to general distributions and nonlinear invariant feature extraction}

This article has focused on Gaussian scenarios, where independence relaxes to uncorrelation and linear predictors suffice. Yet the main ideas and tools of the methodology, such as conditional independence and the removal of explainable variability through the OTBP, are by no means restricted to Gaussian distributions and linear feature extractors. Some extensions of the methodology are straightforward. For instance, general nonlinear functions $W = f(X)$ can be written as linear functions of a set of features depending nonlinearly on $X$, as in Reproducing Kernel Hilbert spaces, and independence between variables can be phrased as uncorrelation between arbitrary measurable functions of those variables. The OTBP does not typically admit a closed form solution outside of the Gaussian realm, but effective numerical procedures exist \cite{peyre2019computational,tabak2025monge} and are easy to use, since $\perpT{S}{Y} = T(S, Y)$ depends only on the data and can therefore be performed off-line.

\subsection{Invariant conditional density estimation}

Within this article, the extracted feature $W$ was used to predict $Y$ through either regression for smooth outcomes or classification for categorical labels. However, one of the main motivations behind the development of the Monge OTBP was to go beyond regression, estimating or simulating the full conditional distribution for $Y\mid W$ \cite{YangTabak2020} . It is therefore natural to extend this capability to account for confounders and contextual variables, yielding a form of invariant conditional density estimation.

\section{Conclusion}

This article introduced a feature extraction principle based on conditional invariance,
requiring the learned representation to be independent of contextual variables
given the outcome.
This criterion formalizes the idea that robust predictors should rely on
information genuinely related to the target, rather than on
environment-specific correlations induced by confounding variables.

We showed that this principle can be implemented through an optimal transport
barycenter construction, which leads in the Gaussian–linear case to a simple
closed-form solution.
The resulting barycentric reduction yields features whose joint distribution
with the outcome is invariant across environments, while preserving predictive
content.

Both population-level and sample-based experiments were performed on synthetic data, demonstrating the methodology's out-of-sample predictive power and robustness. Experiments
under simple controlled covariance shifts illustrate how
the proposed approach compares with anchor regression.
The barycentric method is generally advantageous when source and
target environments differ substantially, while reducing to ordinary least
squares in nearly i.i.d.\ settings.

While this work focuses on the Gaussian–linear case, the formulation in terms of
conditional invariance and optimal transport barycenters suggests a natural extension
to more general frameworks, including categorical covariates, non-Gaussian distributions,
nonlinear feature extractors and invariant conditional density estimation.

\vskip 0.2in

\bibliography{references}

\end{document}